\documentclass[a4paper,12pt]{amsart}

\usepackage{amscd,amssymb,amsmath,latexsym,graphicx,epsfig,color,url}
\usepackage[active]{srcltx}

\usepackage[english, algosection, algoruled, noline]{algorithm2e}

\makeatletter
\def\addots{\mathinner{\mkern1mu\raise\p@
\vbox{\kern7\p@\hbox{.}}\mkern2mu
\raise4\p@\hbox{.}\mkern2mu\raise7\p@\hbox{.}\mkern1mu}}
\makeatother

\catcode`\<=\active \def<{
\fontencoding{T1}\selectfont\symbol{60}\fontencoding{\encodingdefault}}
\newcommand{\assign}{:=}

\newcommand{\xx}{{\bf x}}
\newcommand{\R}{{\mathbb R}}
\newcommand{\C}{{\mathbb C}}
\newcommand{\Q}{{\mathbb Q}}
\newcommand{\N}{{\mathbb N}}

\newcommand{\diag}{{\rm diag}}

\def\ib{\mathbf{i}\,}

\newcommand{\vb}[1]{\mathbf{#1}}
\newcommand{\sprod}[1]{\langle{#1}\rangle}

\def\xx{\vb{x}}

\def\cC{\mathcal{C}}

\def\cQ{\mathcal{Q}}
\newcommand{\Int}{\mathrm{Int}}

\usepackage{xcolor}


\theoremstyle{definition}
\newtheorem{definition}{Definition}[section]
\newtheorem{notation}[definition]{Notation}
\newtheorem{remark}[definition]{Remark}
\newtheorem{example}[definition]{Example}

\theoremstyle{plain}
\newtheorem{lemma}[definition]{Lemma}
\newtheorem{proposition}[definition]{Proposition}

\newtheorem{corollary}[definition]{Corollary}

\title{Univariate Rational Sums of Squares}

\author{Teresa Krick} 
\author{Bernard Mourrain}
\author{Agnes Szanto}
\thanks{The research of Teresa Krick was partly supported by CONICET PIP-11220130100073CO and BID-PICT 2018-02315. The work of Bernard Mourrain was partly supported by the European Union’s Horizon 2020 research and innovation programme under the Marie Sk{\l}odowska-Curie Actions, grant agreement 813211 (POEMA). The research of Agnes Szanto was partly supported by NSF grant  CCF-1813340. }

\address{Teresa  Krick, Departamento de Matem\'atica \& IMAS, Universidad de Buenos Aires \& CONICET, Argentina.}\email{krick@dm.uba.ar}

\address{Bernard Mourrain, Aromath,
 Inria d'Universit\'e C\^ote d'Azur,
   2004, route des Lucioles,
   06902 Sophia Antipolis,
   France.
 }
\email{bernard.mourrain@inria.fr}

\address{Agnes Szanto, North carolina State University, Department of Mathematics, Campus Box 8205, Raleigh, NC, 27695, USA.}
\email{aszanto@ncsu.edu}

\begin{document}

\maketitle
\begin{center}  \em {To the memory of our beloved friend Agnes}\end{center}

\smallskip
\begin{abstract}
   Given rational univariate polynomials $f$ and $g$ such that $\gcd (f,g)$ and $f/\gcd(f,g)$ are relatively prime, we show that $g$ is non-negative at all the real roots of $f$ if and only if $g$ is a sum of squares of rational polynomials modulo $f$. We complete our study by exhibiting an algorithm that produces a  certificate that a polynomial $g$ is non-negative at the real roots of a non-zero polynomial $f$, when the above assumption is satisfied.
\end{abstract}

\medskip{\footnotesize
\noindent \textbf{Keywords.} Positive polynomials; Sum of Squares; Semi-Definite Matrix; Convex cone; Real roots; Exact computation; Certificate; 
}
\section{Introduction}

It is a classical result  that a real univariate polynomial is {\em non-negative on all $\R$} if and only if it is a sum of  squares of real polynomials (and in fact, 2 polynomials are enough). It was then proved by Landau in 1905, see  \cite{landau}, that every univariate polynomial with {\em rational coefficients} which is {non-negative on all $\R$}  is a sum of 8 squares of {\em rational polynomials} (this result was  improved in \cite{pourchet}, lowering the bound  of 8  to the optimal value of 5).

We call this the {\em global case}, when we consider non-negativity on all $\R$. The {\em local case} is when we consider analogous  questions for a polynomial which is non-negative at the real roots of another non-zero polynomial. More explicitly,  the corresponding statement  is: Given  a non-zero   polynomial $f\in \R[x]$, is it true  that a polynomial $g\in \R[x]$  is {non-negative at all the real roots of $f$} if and only if it is congruent modulo $f$ to a sum of squares of polynomials in $\R[x]$? That is, if there exist polynomials $h_i\in \R[x]$, $1\le i\le N$ for some $N\in \N$, such that  
\begin{equation*}
h:=\sum_{i=1}^N {h}_i^2 \quad\mbox{satisfies} \quad h \equiv g \mod f.
\end{equation*}

In \cite{par1}, P. Parrilo gives a very simple construction that shows that this is indeed the case  in a  zero-dimensional radical  setting of multivariate polynomials. In our specific setting his result shows that every $g\in \R[x]$ which is non-negative at all the real roots of a {\em squarefree} polynomial $f\in\R[x]$  is congruent modulo $f$ to a sum of squares of real polynomials. 
In this paper, we consider  the corresponding {\em rational}  question: Given  polynomials $f, g\in \Q[x]$ such that $g$ is non-negative at all the real roots of $f$, is it true that $g$ is congruent modulo $f$ to a sum of squares of  polynomials $h_i\in \Q[x]$? Note that this is equivalent to say that $g$ is congruent modulo $f$ to a  {\em rational positive weighted sum} of squares of polynomials in $\Q[x]$, that is, that there exist $\omega_i\in \Q_{+}$ and  $h_i\in \Q[x]$, $1\le i\le N$, such that 
\begin{equation}\label{eq:SOS certificate}
h:=\sum_{i=1}^N {\omega}_i \,{h}_i^2 \quad\mbox{satisfies} \quad h \equiv g \mod f,
\end{equation}
since  for $\omega_i=m/n\in \Q$ with $m,n\in \N$, $\omega_i h_i^2= mn (h_i/n)^2$.

\smallskip
The positive weighted sum of squares $h$ is commonly called a {\em sum of squares (SOS) decomposition} of $g$ modulo $f$, and such a decomposition, together with the polynomial $q\in \Q[x]$ such that $g=h+q\,f$ is a {\em certificate} of   the non-negativity of $g$ at the real roots of $f$.

\smallskip 
The existence and computation of rational SOS decompositions of positive polynomials has been investigated in the univariate global case for instance in \cite{Chevillardetal2011},  \cite{MAGRON2019200}, or in the multivariate case in \cite{PeyrlParrilo}, \cite{KALTOFEN2008}.
A counter-example in \cite{scheiderer} shows that, in the multivariate case, a rational polynomial which is a sum of squares of real polynomials 
cannot always be decomposed as a  rational sum of squares.
In 
\cite{KALTOFEN2012},
\cite{GUO2012},
rational Artin's type certificates of positivity, that is, fractions of two rational weighted sums of squares polynomials are considered.
In \cite{MagronSafeyElDin2021}, algorithms to compute positivity certificates and bounds on their bit complexity and the size of their output are presented, 
including Artin's type certificates and rational weighted sums of squares certificates for positive polynomials on compact basic semi-algebraic sets.
The algorithms work under some strictly positivity assumptions.
They involve numeric-symbolic tools such as the perturbation algorithm of \cite{Chevillardetal2011},
the rounding-projection algorithm of \cite{PeyrlParrilo} or Semi-Definite Programming solvers.
{More recently, \cite{MagronExactOptimizationSums2019} provides 
a numeric-symbolic algorithm based on rounding-projection techniques for
computing exact representations of polynomials lying in the interior of the cone of nonnegative circuits (SONC) or of the cone of arithmetic-geometric-exponentials (SAGE). In \cite{MagronSumSquaresDecompositions2021}, an algorithm is proposed to compute the representation of a non-negative polynomial $f$ as a rational sum of squares and an element in the gradient ideal of $f$ with rational coefficients, under the hypothesis that the gradient ideal is zero-dimensional and radical, reducing to the univariate case by elimination techniques.
Numeric-symbolic approaches similar to \cite{MagronSafeyElDin2021} are applied to trigonometric polynomials in \cite{MagronExactSOHSdecompositions2022}.
}

\smallskip
In this paper, we first show, by a direct method, that a rational univariate polynomial $g$ 
strictly positive at  the real roots of a rational squarefree polynomial $f$ always admits  a rational SOS decomposition modulo $f$. This can be seen as a very very special case of Putinar's Theorem \cite{Putinar93} over \emph{rational} numbers. 
We then extend the result to rational univariate polynomials $g$ that are non-negative at the real roots of $f$, under an assumption specified in our main result:

\smallskip
\noindent {\bf Theorem.}  {\em Let $f\in \Q[x]$ be a non-zero   polynomial  of degree $n$ and $g\in \Q[x]$ be such that
$\gcd(f,g)$ and $f/\gcd(f,g)$ are relatively prime. Assume that 
$g$ is non-negative at all the real roots of $f$. Then there exist  rational positive weights  ${\omega}_i\in \Q_{+}$ and rational polynomials ${h}_i\in \Q[x]$ of degree $< n$, $1\le i \le N$ for some $N\in \N$, such that 
$$
h:=\sum_{i=1}^N{\omega}_i \,{h}_i^2 \quad\mbox{satisfies} \quad h \equiv g \mod f.
$$}

Note that when $f$ is squarefree, our assumption on $\gcd(f,g)$ and $f/\gcd(f,g)$ being relatively prime is automatically satisfied.
Furthermore, this assumption  seems to be  optimal in order for such an SOS decomposition to exist, as  the following example demonstrates \cite[Remark 1]{par1}:
For $f=x^2$ and $g=x$, $g$ is non-negative on all the (real) roots of $f$ but there is no such SOS decomposition. Note that in this case $\gcd(f,g)=x=f/\gcd(f,g)$ and the polynomials $f$ and $g$ do not satisfy the assumption of our theorem.

\smallskip
Certifying the non-negativity of a polynomial $g$ at the real roots of another polynomial $f$ is a problem of particular importance in Computer Algebra, for instance, for the localisation of real roots \cite{bpr-arag-03}, 
or in Automatic Theorem Proving for the certification of sign conditions over the real numbers. 
It is also useful for checking the sign of polynomials in $\R^n$ or more generally in Polynomial Optimization Problems, 
since one can generically add polynomial constraints like the gradient equations and reduce to a univariate polynomial sign certification problem by elimination of variables (see e.g. \cite{MagronSumSquaresDecompositions2021}).

\smallskip
The proof of our theorem is developed in 
Section~\ref{sec:exist}. It proceeds by first tackling in Subsection~\ref{sub:1} the case when $g$ is strictly positive at all the real roots of a squarefree polynomial $f$ of degree $n$: by modifying the construction in \cite{par1},  we first show there always exists a  {\em real} SOS decomposition $h$  of $g$ modulo $f$, $$h=[1,x,\dots,x^{n-1}] \,Q\, [1,x,\dots,x^{n-1}]^T$$  with $Q\in \R^{n\times n}$   symmetric and {\em  positive definite}. This enables us to perturb the real coefficients in matrix $Q$ in order to turn them rational, while keeping the condition of remaining an SOS decomposition for $g$ modulo $f$, as done in  \cite{PeyrlParrilo} for the global case (with the difference that here we know there always exists such a positive definite real matrix). In a second step, Subsection~\ref{sub:2} deals with the case of a non squarefree polynomial $f$, by applying Hensel lifting and Chinese Remainder Theorem recombination. We finally relax the strictly positive condition to non-negative under our assumption.  

\smallskip
In this paper, we also address the following algorithmic question: Can we produce an algorithm that computes a rational SOS certificate, which size is related to the geometry of the input polynomials?

Several algorithms can be used to certify that a polynomial $g$ is non-negative at the real roots of $f$. We refer to \cite{bpr-arag-03} for a general presentation of these algorithms, 
based for instance on Sturm-Habicth sequences
or isolation of real roots.
The algorithm that we describe in Section~\ref{sec:algorithm} does not require to   isolate or approximate the real roots of $f$. It computes 
a certificate of non-negativity by computing an SOS decomposition of $g$ modulo $f$ using two main ingredients.
The first ingredient is an adaptation of the rounding-projection algorithm of \cite{PeyrlParrilo} to the case of a rational polynomial $g$ strictly positive at the real roots of a squarefree polynomial $f$, following the proof of Proposition \ref{prop:existence}. 
The second ingredient is a reduction of the general case when $\gcd(f,g)$ and $f/\gcd(f,g)$ are relatively prime, to the strictly positive case, then lifting the rational SOS decompositions via Hensel lifting and Chinese Remainder Theorem, following the proof of our main theorem.

\medskip
\noindent {\bf Acknowledgment.} {This collaboration and research project started because Agnes  contacted the two other authors after an invitation by the  organizers of the MCA 2021 session ``Symbolic computation: theory, algorithms and applications", Alicia Dickenstein, Alexey Ovchinnikov and Veronika Pillwein,  to submit a publication related to her talk to the Revista de la Uni\'on Matem\'atica Argentina. We all worked together during 2021 and, as usual when working with her, Agnes' input was crucial to produce the output. Agnes  sadly passed away on March 21, 2022
. We miss her dearly.}

\section{Existence of a rational SOS decomposition}\label{sec:exist}

\subsection{The squarefree and strictly positive case} \label{sub:1}

In this section we assume that $f\in \R[x]$ is a squarefree polynomial and that $g\in \R[x]$ is strictly positive at the real roots of $f$. We fix the following notation.

\begin{notation} \label{not:f}
We denote 
$$f=\sum_{i=0}^n f_i \, x^i=f_n(x-\xi_1)\cdots (x-\xi_n) \ \mbox{ with } \ \xi_i\ne \xi_j\in \C \ \mbox{ for } \ i\ne j,$$
where   $\xi_1, \ldots, \xi_k$ are the real roots of $f$ (for some $0\le k\le n$) while  the complex non-real roots are labeled as 
$\xi_{k+2 i-1}, \xi_{k+2i}$ with $\xi_{k+2 i-1}= \overline{\xi_{k+2i}}$ for $1\le i\le  \frac{n-k}{ 2}$.
\end{notation}

The Lagrange basis for $\xi_1,\ldots,\xi_n$  is denoted by $u_1,...,u_n\in \C[x]$, i.e.
\begin{equation}\label{eq:lagrange0}u_i=\prod_{j\ne i} \frac{x-\xi_j}{\xi_i-\xi_j} = \frac{f}{f'(\xi_i)(x-\xi_i)}\quad \mbox{for } \, 1\le i\le n. \end{equation}
It satisfies that for any polynomial $p\in \C[x]$ one has
\begin{equation}\label{eq:interpolation}
    p(x) \equiv \sum_{i=1}^{n} p(\xi_i)\, u_i(x) \mod f.
\end{equation}
The  basis $u_1,\dots,u_n$ is also defined by the conditions  $\deg(u_i)\leq n-1$, $1\le i\le n$ and  $u_i(\xi_j)=\delta_{i,j}$ for $1\le i,j\le n$. This implies by \eqref{eq:interpolation} that
\begin{equation}\label{eq:lagrange}u_i^2\equiv u_i \mod f \ \mbox{ for } 1\le i\le n \ \mbox{ and } \ u_iu_j\equiv 0 \mod f \ \mbox{ for } i\neq j.
\end{equation}

Given $g\in \R[x]$, Parrilo constructed in \cite{par1} the following real polynomial 

\begin{align}\label{eq:parrilo pol}
 & \sum_{i=1}^k    g(\xi_i)\, u_i^2 + \sum_{i=1}^{(n-k)/2}\big( \sqrt{g(\xi_{k+2i})}u_{k+2i}+\sqrt{g(\overline{\xi_{k+2i}})}\overline{u_{k+2i}}\big)^2\\
 & = \sum_{i=1}^n    g(\xi_i)\, u_i^2 + 2\sum_{i=1}^{(n-k)/2} |g(\xi_{k+2i})|\,u_{k+2i-1} u_{k+2i},\nonumber
\end{align}
where the identity follows from the fact that  the interpolation polynomials associated to the complex non-real roots of $f$ are pairwise conjugate, i.e. 
$\overline{u_{k+2i}}=u_{k+2i-1}$.

This polynomial is  a sum of squares in $\R[x]$ whenever $g$ is non-negative at the real roots of $f$, as shown by identity \eqref{eq:parrilo pol},
since for $1\le i \le \frac{n-k}{2}$,
$$ \sqrt{g(\xi_{k+2i})}u_{k+2i}+\sqrt{g(\overline{\xi_{k+2i}})}\overline{u_{k+2i}}=2\Re( \sqrt{g(\xi_{k+2i})}u_{k+2i}),$$ where
$\Re$ denotes the real part.
Furthermore, it is congruent to $ g$ modulo $ f$ since by \eqref{eq:lagrange} and \eqref{eq:interpolation}  we have
$$
 \sum_{i=1}^n    g(\xi_i)\, u_i^2 + 2\sum_{i=1}^{(n-k)/2} |g(\xi_{k+2i})|\,u_{k+2i-1} u_{k+2i}\equiv \sum_{i=1}^n    g(\xi_i)\, u_i \equiv g\mod f .  
$$

Inspired by this construction, we define 
for fixed $\lambda_i \in \R$, $1\le i\le \frac{n-k}{2}$, the polynomial 
\begin{equation}\label{eq:psos}
h = \sum_{i=1}^{n} g(\xi_i)\, u_i^2   + 2\sum_{i=1}^{(n-k)/2} \lambda_i\, u_{k+2i-1} u_{k+2i}
\end{equation} which is also congruent to $g$ modulo $f$ for any choice of $\lambda_i$, $1\le i\le \frac{n-k}{2}$.\\

Next proposition shows that for a range of values of $\lambda_i$, this polynomial $h$ is a sum of $n$ linearly independent squares. 
\begin{proposition} \label{prop:homega} Let $f\in \R[x]$ be a squarefree polynomial as in Notation~\ref{not:f} and let $g\in \R[x]$ be such that $g(\xi_i)>0$ for $1\le i\le k$.\\
Fix $\lambda_i>|g(\xi_{k+2i})|$, $1\le i\le \frac{n-k}{2}$, and let  
$$ h = \sum_{i=1}^{n} g(\xi_i)\, u_i^2   + 2\sum_{i=1}^{(n-k)/2} \lambda_i\, u_{k+2i-1} u_{k+2i}$$ be the polynomial $h$ defined in \eqref{eq:psos}, which is congruent to $g$ modulo $f$. Then $h$  is a positive weighted sum of $n$ squares of linearly independent real polynomials of degree strictly bounded by $n$. More precisely, 
\begin{eqnarray}\label{eq:nsos}
 h &=& \sum_{i=1}^{n} \omega_i\, h_i^2
\end{eqnarray}
where
\begin{itemize}
    \item $h_i=u_i$ and  $\omega_i= g(\xi_i)$ \quad for  $1\le i\le k$,\\
    
    \item $h_{k+2i-1}= \Re(u_{k+2i}) - \dfrac{\Im(g(\xi_{k+2i}))}{ \lambda_i +\Re(g(\xi_{k+2i}))}\, \Im(u_{k+2i})$,  \\
  
  \noindent  
 $h_{k+2i}= \dfrac{\sqrt{ \lambda_i^2-|g(\xi_{k+2i})|^2}}{\lambda_i + \Re(g(\xi_{k+2i}))} \Im(u_{k+2i})$,\\
 
 \noindent 
  and   $\omega_{k+2i-1}=\omega_{k+2i} ={2\,(\lambda_i + \Re(g(\xi_{k+2i})))}$  \quad for $1\le i\le \frac{n-k}{2}$.
  
  \noindent (Here $\Re$ and $\Im$ denote real and imaginary part respectively.)
\end{itemize}

\end{proposition}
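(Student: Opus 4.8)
The plan is to regard $h$ as a quadratic form in a well-chosen \emph{real} basis of the $\R$-vector space of polynomials of degree $<n$, to check that its Gram matrix in that basis is positive definite under the hypotheses $g(\xi_i)>0$ and $\lambda_i>|g(\xi_{k+2i})|$, and then to read off both the asserted SOS decomposition and the linear independence of the $h_i$ from a block $LDL^T$ factorization of that matrix.

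First I would set up the real basis. Since $f$ has real coefficients, $u_1,\dots,u_k$ are real polynomials, $u_{k+2i-1}=\overline{u_{k+2i}}$ and $g(\xi_{k+2i-1})=\overline{g(\xi_{k+2i})}$ for $1\le i\le (n-k)/2$, and $u_1,\dots,u_n$ form a $\C$-basis of the polynomials of degree $<n$ (they are the Lagrange polynomials at the distinct nodes $\xi_1,\dots,\xi_n$). Setting $a_i:=\Re(u_{k+2i})$ and $b_i:=\Im(u_{k+2i})$, the tuple $(u_1,\dots,u_k,a_1,b_1,\dots,a_{(n-k)/2},b_{(n-k)/2})$ is obtained from $(u_1,\dots,u_n)$ by a block-diagonal invertible $\C$-linear substitution (the identity on the first $k$ coordinates, and $u_{k+2i}=a_i+\mathbf{i}b_i$, $u_{k+2i-1}=a_i-\mathbf{i}b_i$ on each conjugate pair), hence is an $\R$-basis of the real polynomials of degree $<n$.

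Next I would rewrite $h$ from \eqref{eq:psos} in this basis. The summand of $h$ coming from the $i$-th conjugate pair is $g(\xi_{k+2i-1})u_{k+2i-1}^2+g(\xi_{k+2i})u_{k+2i}^2+2\lambda_i u_{k+2i-1}u_{k+2i}$; grouping the first two terms as $2\Re\!\big(g(\xi_{k+2i})\,u_{k+2i}^2\big)$, using $2\lambda_i u_{k+2i-1}u_{k+2i}=2\lambda_i(a_i^2+b_i^2)$, and expanding, this equals the real quadratic form
\[
2\big(\lambda_i+\Re(g(\xi_{k+2i}))\big)a_i^2-4\,\Im(g(\xi_{k+2i}))\,a_ib_i+2\big(\lambda_i-\Re(g(\xi_{k+2i}))\big)b_i^2,
\]
whose symmetric matrix has leading entry $2(\lambda_i+\Re(g(\xi_{k+2i})))$ and determinant $4(\lambda_i^2-|g(\xi_{k+2i})|^2)$. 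Since these pair-summands, together with $\sum_{i=1}^k g(\xi_i)u_i^2$, involve disjoint blocks of the new basis, $h$ has in that basis a block-diagonal Gram matrix: $1\times 1$ blocks $g(\xi_i)$ for $1\le i\le k$, and the $2\times 2$ blocks above. The hypotheses make this matrix positive definite: $g(\xi_i)>0$, while $\lambda_i>|g(\xi_{k+2i})|\ge-\Re(g(\xi_{k+2i}))$ gives positive leading entry and $\lambda_i>|g(\xi_{k+2i})|$ gives positive determinant. Completing the square block by block (the $1\times1$ blocks are already squares; in each $2\times2$ block complete the square in $a_i$ first) gives $h=\sum_{i=1}^n\omega_i h_i^2$ with the $\omega_i$ the pivots and the $h_i$ the corresponding unit-triangular linear forms. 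Matching terms yields $h_i=u_i$, $\omega_i=g(\xi_i)$ for $1\le i\le k$; then $h_{k+2i-1}=a_i-\frac{\Im(g(\xi_{k+2i}))}{\lambda_i+\Re(g(\xi_{k+2i}))}\,b_i$ with pivot $2(\lambda_i+\Re(g(\xi_{k+2i})))$; and a last term with linear form $b_i$ and pivot $\frac{2(\lambda_i^2-|g(\xi_{k+2i})|^2)}{\lambda_i+\Re(g(\xi_{k+2i}))}$, which I would rewrite as $2(\lambda_i+\Re(g(\xi_{k+2i})))\big(\tfrac{\sqrt{\lambda_i^2-|g(\xi_{k+2i})|^2}}{\lambda_i+\Re(g(\xi_{k+2i}))}\,b_i\big)^2$ to land exactly on the stated $h_{k+2i}$ and $\omega_{k+2i}$. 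Finally, the $h_i$ are linearly independent because they are the image of the $\R$-basis above under the invertible unit lower-triangular $LDL^T$ factor followed by the nonzero rescalings, and each has degree $<n$ since $u_1,\dots,u_n$ do.

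The computation is routine; the only point needing care is tracking invertibility — of both the change from $(u_1,\dots,u_n)$ to the real basis and of the triangular factor — so that the resulting $h_i$ are genuinely linearly independent, but the relevant determinants are explicit and nonzero precisely under the stated hypotheses. Alternatively one can skip the structural argument and verify the identity $h=\sum_i\omega_i h_i^2$ directly by expanding the claimed right-hand side using $u_{k+2i-1}=\overline{u_{k+2i}}$, at the cost of a separate and slightly less transparent linear-independence check.
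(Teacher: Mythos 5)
Your proposal is correct and follows essentially the same route as the paper's proof: the block-diagonal Gram matrix and block $LDL^T$ completion of squares you describe is exactly the paper's explicit identity $(a+\ib b)(u+\ib v)^2+(a-\ib b)(u-\ib v)^2+2\lambda|u+\ib v|^2 = 2(\lambda+a)\bigl((u-\tfrac{b}{\lambda+a}v)^2+(\lambda^2-a^2-b^2)(\tfrac{v}{\lambda+a})^2\bigr)$ applied to each conjugate pair, with the same use of $\lambda_i>|g(\xi_{k+2i})|$ to get positive pivots. Your linear-independence argument (invertible passage from $u_1,\dots,u_n$ to the real/imaginary parts, then a unit-triangular factor) likewise matches the paper's closing step.
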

\begin{proof}
We first show that  the expressions in \eqref{eq:psos} and \eqref{eq:nsos} coincide.

Set $\gamma_i:=g(\xi_{k+2i})$ for $1\le i\le \frac{n-k}{2}$.
Applying the identity 
\begin{eqnarray*}
\lefteqn{ (a + \ib b) (u+ \ib v)^2
+ (a - \ib b) (u - \ib v)^2 + 2\,\lambda |u+\ib v|^2 }\\
&=& 2\,\big( (\lambda+a)\,u^2 - 2\,b\, u\,v + (\lambda-a)\, v^2 \big)\\
&=& {2 (\lambda + a)}\,\big( ( u - {b \over  \lambda + a} \, v)^2 +  (\lambda^2-a^2 - b^2)\, ({v\over \lambda + a}) ^2\big)
\end{eqnarray*}
for $\lambda + a\ne 0$, we get from Identity~\eqref{eq:psos}:
\begin{align*}
 h &= \ \sum_{i=1}^{k} g(\xi_i)\, u_i^2
   + \sum_{i=1}^{(n-k)/2}\Big( \gamma_i\, u_{k+2i}^{2}+ 
   \overline{\gamma}_i\,\overline{u_{k+2i}}^{2}+ 2\, \lambda_i \,|u_{k+2i}|^2\Big)\\
   &= \  \sum_{i=1}^{k} g(\xi_i)\, u_i^2\\
   &\quad + \sum_{i=1}^{(n-k)/2} 2\,(\lambda_i + \Re(\gamma_i))\, 
   \big(\Re(u_{k+2i}) -{\Im(\gamma_i)\over \lambda_i + \Re(\gamma_i)}\, 
   \Im(u_{k+2i})\big)^2\\
   &\quad + \sum_{i=1}^{(n-k)/2} 2\,(\lambda_i + \Re(\gamma_i)) \big({\sqrt{ \lambda_i^2-|\gamma_i|^2}\over\lambda_i + \Re(\gamma_i)} \Im(u_{k+2i})\big)^{2}
\end{align*}
since  $\lambda_i>|\gamma_i|$ implies $\lambda_i + \Re(\gamma_i) \ne 0$ and $\lambda_i^2-|\gamma_i|^2>0$.

Now,  observe that $\omega_i>0$ since for $1\le i\le k$, $\omega_i:=g(\xi_i)>0$ by assumption, and for $1\le i\le \frac{n-k}{2}$,
$\omega_{k+2i-1}=\omega_{k+2i}:=2(\lambda_i + \Re(\gamma_i)) >0$.
 Therefore $h$ is a positive weighted sum of $n$ squares of   polynomials of degree $<n$ with real coefficients. 

Finally, as the polynomials $u_i$ are linearly independent over $\C$ and $u_{k+2i-1}=\overline{u_{k+2i}}$, the real polynomials
$$u_1, \ldots, u_k, \Re(u_{k+2}), \Im(u_{k+2}), \ldots, \Re(u_{n}), \Im(u_{n})$$ are also linearly independent. 
This implies that  the real polynomials 
$h_1, \ldots, h_n$ are  also linearly independent over $\R$ (and in particular non-zero).
\end{proof}

\medskip

We fix the following notation for the rest of the paper:
\begin{notation}
We set $S^n(\R)$ for the set of symmetric matrices in $\R^{n\times n}$ and 
 $S_+^n(\R)$ for its cone of symmetric positive semidefinite matrices. 
 We equip $S^n(\R)$ with the Frobenius  inner product  $\sprod{A,B}= \mathrm{trace}(A \,B)$, $\forall A,B \in S^{n}(\R)$, which induces the Frobenius norm $\|\cdot\|$. The $2$-norm on the coefficients of polynomials in $ \R[x]$ is also denoted by $\|\cdot\|$.
For $m\in \N_0$ we set  $\R[x]_{m}$ for the set of polynomials of degree bounded by $m$.  
Finally,  $ \xx=[1, x, \ldots, x^{n-1}]^T$ is the column vector of monomials of degree $<n$.  
\end{notation}

Note that for any polynomial $p=\sum_{i=0}^d p_ix^i$, one has
\begin{align}\label{eq:boundpol}\|p\,f\|&\le \, \sum_{i=0}^d \|p_ix^i f\| \,\le \, \sum_{i=0}^d |p_i|\,\|x^i f\|\\& \le \, \sum_{i=0}^d \|p\|\,  \|f\|\,\le \, (d+1)\|p\|\, \|f\|\nonumber.\end{align}
\\

As a first corollary of Proposition~\ref{prop:homega}, we have:
\begin{corollary}\label{cor:L} Let $f,g\in \R[x]$ with $f$ of degree $n$ with simple roots $\xi_i,\ 1\le i\le n$, and $g$ of degree $<n$ that is  strictly positive at the real roots $\xi_1,\dots,\xi_k$ of $f$. Then, there exists a pair $(Q,q)\in S^n(\R)\times \R[x]_{n-2}$  with $Q$  {\em positive definite} such that $g=\xx^T Q \xx + q \,f$. \\ In particular  $Q\in \Int(S_+^n(\R))$, where $\Int$ denotes  interior. 
\end{corollary}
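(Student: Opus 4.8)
\emph{Plan.} The idea is to simply re-express the sum-of-squares identity of Proposition~\ref{prop:homega} in matrix form. Since $f$ has $n$ simple roots it is squarefree, so Notation~\ref{not:f} applies; fixing any admissible parameters, e.g.\ $\lambda_i := |g(\xi_{k+2i})| + 1$ for $1 \le i \le \frac{n-k}{2}$, and using that $g(\xi_i) > 0$ for $1 \le i \le k$ by hypothesis, Proposition~\ref{prop:homega} provides weights $\omega_1,\dots,\omega_n > 0$ and polynomials $h_1,\dots,h_n \in \R[x]$ of degree $< n$, linearly independent over $\R$, with $h := \sum_{i=1}^n \omega_i h_i^2 \equiv g \bmod f$.

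Next I would move to coordinates. Writing $h_i = \xx^T c_i$ with coefficient vector $c_i \in \R^n$, one has $h_i^2 = \xx^T(c_i c_i^T)\xx$, so $h = \xx^T Q\,\xx$ with $Q := \sum_{i=1}^n \omega_i\, c_i c_i^T \in S^n(\R)$. The crux is positive definiteness of $Q$: for every $v \in \R^n$, $v^T Q v = \sum_{i=1}^n \omega_i (c_i^T v)^2 \ge 0$, with equality only if $c_i^T v = 0$ for all $i$; as the $h_i$ are linearly independent, the vectors $c_1,\dots,c_n$ form a basis of $\R^n$, forcing $v = 0$. (Equivalently $Q = C^T D\,C$, with $C$ the invertible matrix whose rows are the $c_i^T$ and $D = \diag(\omega_1,\dots,\omega_n)$ positive definite.)

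Finally I would bound the quotient. Since each $\deg h_i < n$ we get $\deg h \le 2n-2$, while $\deg g \le n-1$, so the polynomial $q$ determined by $h - g = q\,f$ has $\deg q \le (2n-2)-n = n-2$, i.e.\ $q \in \R[x]_{n-2}$. Thus $g = \xx^T Q\,\xx + (-q)\,f$ with $-q \in \R[x]_{n-2}$ and $Q$ positive definite, which is the assertion after renaming $-q$ as $q$. The ``in particular'' clause is then immediate, since the interior of the cone $S_+^n(\R)$ is exactly the set of positive definite symmetric matrices.

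I do not anticipate a genuine obstacle: the statement is essentially a restatement of Proposition~\ref{prop:homega}. The only real (and trivial) argument is the implication ``$h_1,\dots,h_n$ linearly independent $\Rightarrow$ $Q$ positive definite''; the only thing to watch is that the degenerate cases $k=0$, $k=n$ and small $n$ are covered, which they are, because there the relevant sums in Proposition~\ref{prop:homega} are simply empty (and for $n\le 1$ one has $q=0$).
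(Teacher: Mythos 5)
Your proposal is correct and follows essentially the same route as the paper: both invoke Proposition~\ref{prop:homega}, assemble the coefficient vectors of $h_1,\dots,h_n$ into an invertible matrix so that $Q=H\Delta H^T$ (your $C^TDC$) is positive definite, and obtain $q\in\R[x]_{n-2}$ from the degree count $\deg h\le 2n-2$. No gaps worth noting.
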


\begin{proof} For fixed $\lambda_i>|g(\xi_i)|$, $1\le i\le {n-k\over 2}$, let $H$  be the coefficient matrix of the polynomials $h_1, \ldots, h_n$ of Proposition~\ref{prop:homega} in the monomial basis $1, x, \ldots, x^{n-1}$, so that
\begin{equation}\label{eq:H}
[h_1,\dots,h_n]=\xx^T\, H.
\end{equation}
The matrix $H$ is invertible since $h_1, \ldots, h_n$ are linearly independent.
Let $\Delta$ be the diagonal matrix 
\begin{equation}\label{eq:Delta}
\Delta=\diag(\omega_1, \ldots, \omega_n).
\end{equation}
 
Then \eqref{eq:nsos} rewrites as 
\begin{equation}\label{eq:decomp}
   h = \xx^T H \Delta H^T \xx = \xx^T {Q}\, \xx 
\end{equation}
where ${Q} := H \Delta H^T $ is  positive definite since $H$ is invertible  and $\omega_i >0$ for $1\le i\le  n$. Also,
as $h \equiv g \mod f$ and $\deg(h)\le 2n-2$, there exists ${q}\in \R[x]_{n-2}$  such that $g = h + {q}\, f$. \\
Finally  ${Q}\in \Int(S_+^n(\R))$ since $\det({Q})>0$.
\end{proof}

\begin{remark}\label{rem:LDL}
The passage from $h=\sum_{i=1}^n\omega_ih_i^2$ with $\omega_i\in \R_{>0}$, $1\le i\le n$, to $h=\xx^TQ\xx$ where $Q\in S_+^n(\R)$ is a positive definite matrix, and vice-versa, is quite standard: 

As shown in the proof of Corollary \ref{cor:L}, 
$Q=H\Delta H^T$ where $H$ and $\Delta$ are defined in \eqref{eq:H} and \eqref{eq:Delta}.

Conversely, an exact  square-root-free Cholesky decomposition  of a positive definite matrix $Q\in S_+^n(\R)$ yields
$$
Q= LDL^T,
$$
where L is a lower unitriangular  matrix  and  $D$ is a diagonal matrix with  positive entries.
For instance, this decomposition can be computed exactly over $\Q$ through LU decomposition via Gaussian elimination of matrix Q. 

Then $\omega_1,\dots,\omega_n$ are the diagonal entries of $D$ and 
$$
[h_1, \ldots, h_n]:= {\bf x}^T L.
$$
\rightline{$\square$}
\end{remark}

Note that when $\lambda_i=|g(\xi_{k+2i})|$, which is the case in  Parrilo's polynomial~\eqref{eq:parrilo pol}, 
$h_{k+2i}=0$ and therefore these polynomials $h_i$, $1\le i\le n$, are not linearly independent. This means  that Parrilo's polynomial~\eqref{eq:parrilo pol} lies in the border of the cone  $S^n_+(\R)$. What we were able to do in Proposition~\ref{prop:homega} is to modify Parrilo's construction in order to obtain a polynomial $h$ in the interior of this cone. This   gives room to perturb it a little in order to get a rational polynomial with the same characteristics, and  yields the particular version of our main theorem when $g$ is strictly positive at all the real roots of a squarefree polynomial  $f$.
To describe this construction, we introduce the following ingredients.
\begin{notation}
\label{def:pi_p}
Let  $p=p_0 + p_1 x + \cdots + p_{2n-2} x^{2n-2}\in \R[x]$. We define the affine space 
$$
\cQ_p=\{ Q\in S^n(\R) \ : \ \xx^T Q \,\xx = p\},
$$ and  the
symmetric matrix
\begin{equation}\label{eq:Qp}
Q_p=\left[ \begin{array}{ccccc}p_0&\frac{p_1}{2}& \dots &\frac{p_{n-2}}{n-1}& \frac{p_{n-1}}{n}\\ 
\frac{p_1}{2}& &\addots & \addots & \frac{p_{n}}{n-1}\\
\vdots & \addots & \addots & \addots & \vdots\\
\frac{p_{n-2}}{n+1}&\addots  &  \addots& & \frac{p_{2n-3}}{2}\\ 
\frac{p_{n-1}}{n}& \frac{p_n}{n-1}& \dots &  \frac{p_{2n-3}}{2}&p_{2n-2}
\end{array}\right],
\end{equation}
which  satisfies
\begin{equation}\label{eq:xQpx}
\xx^T Q_p \xx = \sum_{k=0}^{2n-2} p_k x^k = p,
\end{equation} and therefore $Q_p\in \cQ_p$. 
\end{notation}

We note for further use that we have 
\begin{equation}\label{eq:bound}
\|Q_p\|=
\big(\sum_{k=0}^{2n-2} s_{k} \Big(\frac{p_k}{s_{k}}\Big)^2 \big)^{1/2} \le \big(\sum_{k=0}^{2n-2}p_k^2\big)^{1/2} =\|p\|,
\end{equation}
where 
\begin{equation}\label{eq:sk}s_k=s_{2n-2-k}=k+1,\quad 0\le k\le n-1,\end{equation} denotes the number of entries  in each of the $2n-1$ antidiagonals of $Q_p$.

\smallskip

We now describe the orthogonal projection from $S^n(\R)$  on $\cQ_p$  for the Frobenius norm, in a more convenient matrix formulation for the univariate case than in \cite[Prop.7]{PeyrlParrilo}, and prove it for sake of completeness. 
\begin{lemma}\label{lem:proj}
The  map
\begin{equation*}
    \pi_p:  S^n(\R) \longrightarrow  \cQ_p  \quad , \quad Q \longmapsto Q - Q_{\xx^T Q \xx-p} 
\end{equation*}
is the orthogonal projection onto the affine space $\cQ_p$ for the norm $\|\cdot \|$.
\end{lemma}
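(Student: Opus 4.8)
The plan is to verify the two defining properties of an orthogonal projection onto an affine subspace: that $\pi_p(Q)$ actually lands in $\cQ_p$, and that the ``error vector'' $Q - \pi_p(Q)$ is orthogonal (in the Frobenius inner product) to the direction space of $\cQ_p$. First I would check membership: by definition $\pi_p(Q) = Q - Q_{\xx^T Q \xx - p}$, so applying the quadratic form gives $\xx^T \pi_p(Q)\, \xx = \xx^T Q \xx - \xx^T Q_{\xx^T Q\xx - p}\,\xx$, and by \eqref{eq:xQpx} the subtracted term equals the polynomial $\xx^T Q \xx - p$ itself; hence $\xx^T \pi_p(Q)\,\xx = p$, so $\pi_p(Q) \in \cQ_p$. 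This uses only that $p \mapsto Q_p$ is a right inverse to the map $Q \mapsto \xx^T Q \xx$, which is \eqref{eq:xQpx}.

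Next I would identify the direction space $\cQ_0 = \{ A \in S^n(\R) : \xx^T A \xx = 0\}$, the kernel of the linear map $\Phi: S^n(\R) \to \R[x]_{2n-2}$, $A \mapsto \xx^T A \xx$. Since $\cQ_p = Q_p + \cQ_0$ is a coset of this kernel, orthogonal projection onto $\cQ_p$ is characterized by: $\pi_p(Q) \in \cQ_p$ and $Q - \pi_p(Q) \perp \cQ_0$. The first is done; for the second, note $Q - \pi_p(Q) = Q_{\xx^T Q \xx - p}$, so I must show $\sprod{Q_r, A} = 0$ for every $A \in \cQ_0$ and every polynomial $r = \xx^T Q \xx - p$ of degree $\le 2n-2$. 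The key computation: for any polynomial $r = \sum_k r_k x^k$, the matrix $Q_r$ has entries $(Q_r)_{ij} = r_{i+j}/s_{i+j}$ (with indices from $0$ to $n-1$), so that $\sprod{Q_r, A} = \mathrm{trace}(Q_r A) = \sum_{i,j} (Q_r)_{ij} A_{ij} = \sum_k \frac{r_k}{s_k} \sum_{i+j=k} A_{ij} = \sum_k \frac{r_k}{s_k} \cdot [\text{coefficient of } x^k \text{ in } \xx^T A \xx]$. If $A \in \cQ_0$, every such coefficient vanishes, hence $\sprod{Q_r, A} = 0$. This establishes the orthogonality condition and completes the proof.

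The main obstacle — really the only non-routine point — is pinning down the bookkeeping identity $\sprod{Q_r, A} = \sum_k \frac{r_k}{s_k}\,[x^k](\xx^T A \xx)$, i.e. making precise that the symmetric ``averaging over antidiagonals'' construction of $Q_r$ is, up to the weights $s_k$, the adjoint of $\Phi$. Concretely one checks that $[x^k](\xx^T A \xx) = \sum_{i+j=k} A_{ij}$ and that $Q_r$ is constant equal to $r_k/s_k$ on the $k$-th antidiagonal (which has $s_k$ entries, by \eqref{eq:sk}), so the two sums match term by term. Once this is in hand, the rest is immediate from linearity and the fact that $\cQ_p$ is an affine subspace with direction $\cQ_0$. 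I would present the argument in the order above: (i) $Q_r$ has the stated antidiagonal-constant form, with the coefficient of $x^k$ in $\xx^T A\xx$ being the antidiagonal sum of $A$; (ii) hence $\sprod{Q_r,A}$ equals the claimed weighted sum; (iii) membership $\pi_p(Q)\in\cQ_p$; (iv) orthogonality to $\cQ_0$; (v) conclude.
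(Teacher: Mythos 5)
Your proof is correct and follows essentially the same route as the paper: membership in $\cQ_p$ via \eqref{eq:xQpx}, then orthogonality of $Q-\pi_p(Q)=Q_{\xx^T Q\xx-p}$ to the direction space of $\cQ_p$ using the antidiagonal structure of $Q_r$ and the weights $s_k$. The only cosmetic difference is that the paper packages the antidiagonal bookkeeping through the Hankel matrices $H_k$ (writing $Q_r=\sum_k \frac{r_k}{s_k}H_k$ and noting the $H_k$ span $\cQ_p^\perp$), whereas you carry out the same inner-product computation $\sprod{Q_r,A}=\sum_k \frac{r_k}{s_k}[x^k](\xx^T A\xx)$ directly.
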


\begin{proof}
Let $Q\in S^n(\R)$.
By \eqref{eq:xQpx}, we have:
$$
\xx^T \pi_p(Q)\, \xx = \xx^T Q \xx- (\xx^T Q \xx -p)= p,
$$ and thus $\pi_p(Q)\in \cQ_p$.\\

To prove that $\pi_p(Q)$ is the orthogonal projection of $Q$ on $\cQ_p$, we show that $Q-\pi_p(Q)$ is orthogonal to $\cQ_p$:\\ We first observe that for any $Q\in S^n(\R)$,
\begin{equation*}\label{eq:xQx}
   \xx^T Q \xx = \sum_{k=0}^{2 n-2} 
\Big(\sum_{i+j=k+2} Q_{i,j}\Big) x^k =
\sum_{k=0}^{2 n-2} \sprod{Q,H_k} x^k,
\end{equation*}
where  for $0\le k\le 2n-2$, $H_k\in S^n(\R)$  is the Hankel matrix such that $(H_k)_{i,j}= 1$ if $i+j =k+2$ and $0$ otherwise, $1\le i,j\le n$.
This shows that 
the affine space $\cQ_p$ is  defined by the equations 
\begin{equation*}\label{eq:equationQp}
\sprod{Q,H_k} - p_k =0, \quad k=0,\ldots, 2n-2,
\end{equation*}
which implies that the vector space $\cQ_p^\perp$  orthogonal to $\cQ_p$ is spanned by $(H_k)_{0\le k\le 2n-2}$.\\
On another hand we can easily verify from its definition that 
\begin{equation*}\label{eq:QpwithHk}
Q_p = \sum_{k=0}^{2n-2}  \frac{p_k}{s_{k}} H_k,
\end{equation*}
where   $s_{k}$ is defined in \eqref{eq:sk}.  Therefore,
$$
Q - \pi_p(Q) = Q_{ \xx^T Q \xx-p} = \sum_{k=0}^{2n-2} \big(\sprod{Q,H_k}-\frac{p_k}{s_k} \big)  H_k,
$$
which shows that  $Q-\pi_p(Q)$ is a linear combination of  $(H_k)_{0\le k\le 2n-2}$, and thus orthogonal to  $\cQ_p$.
\end{proof}

We are going to use this projection to compute a rational sum of squares modulo $f$ for a polynomial $g$ strictly positive at the real roots of $f$. 
\begin{proposition}\label{prop:existence}
Let $f\in \Q[x]$ be a non-zero squarefree  polynomial  and $g\in \Q[x]$ be such that
$g$ is strictly positive at all the real roots of $f$. Then there exist   polynomials ${h}_i\in \Q[x]$ of degree $< n$ and  positive weights  ${\omega}_i\in \Q_{+}$, $1\le i \le n$, such that 
$$
h:=\sum_{i=1}^n {\omega}_i \,{h}_i^2 \quad\mbox{satisfies} \quad h \equiv g \mod f.
$$ 
\end{proposition}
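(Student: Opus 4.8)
The plan is to reduce the rational case to a perturbation argument built on Corollary~\ref{cor:L} and Lemma~\ref{lem:proj}, following the rounding-projection strategy of \cite{PeyrlParrilo}. Since $f\in\Q[x]$ is squarefree and $g$ is strictly positive at its real roots, after replacing $g$ by its remainder modulo $f$ we may assume $\deg g<n$. By Corollary~\ref{cor:L} there exists a pair $(Q^\star,q^\star)\in S^n(\R)\times\R[x]_{n-2}$ with $Q^\star$ positive definite (in fact in $\Int(S_+^n(\R))$) such that $g=\xx^TQ^\star\xx+q^\star f$, i.e.\ $Q^\star\in\cQ_{g-q^\star f}$. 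The idea is then: pick a rational symmetric matrix $\widetilde Q$ close to $Q^\star$, project it back onto the correct affine space using $\pi_{g-qf}$ for a suitable rational $q$ so that the constraint $\xx^T\widetilde Q\xx\equiv g\bmod f$ is restored exactly, and argue that if the initial approximation is good enough the projected matrix is still positive definite. A square-root-free Cholesky (LDL$^T$) decomposition of that rational positive definite matrix, as recalled in Remark~\ref{rem:LDL}, then yields the desired rational weights $\omega_i\in\Q_+$ and rational polynomials $h_i\in\Q[x]_{n-1}$ with $h=\sum\omega_ih_i^2\equiv g\bmod f$.

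Concretely, I would proceed as follows. First fix $\varepsilon>0$ smaller than the distance from $Q^\star$ to the boundary of $S_+^n(\R)$, i.e.\ smaller than $\lambda_{\min}(Q^\star)$, so that every symmetric matrix within Frobenius distance $\varepsilon$ of $Q^\star$ is still positive definite (this uses that eigenvalues are $1$-Lipschitz for the Frobenius, hence operator, norm). Next choose a rational symmetric matrix $\widetilde Q$ with $\|\widetilde Q-Q^\star\|<\varepsilon/2$; this is possible because $\Q$ is dense in $\R$ and we only need finitely many entries approximated. Then form the rational polynomial $\widetilde p:=\xx^T\widetilde Q\xx\in\Q[x]$ and let $q\in\Q[x]_{n-2}$ be the (unique, rational) polynomial such that $g\equiv \widetilde p - qf\bmod f$ agrees with $g$ in degrees $<n$ — more precisely choose $q$ so that $\widetilde p-qf$ has degree $\le 2n-2$ and reduces to $g$ modulo $f$; set $p:=g-q f$, which is rational of degree $\le 2n-2$. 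Now apply the orthogonal projection $\pi_p$ of Lemma~\ref{lem:proj} to $\widetilde Q$: since $\pi_p$ has rational coefficients (it subtracts $Q_{\xx^T\widetilde Q\xx-p}$, a rational matrix), $Q:=\pi_p(\widetilde Q)\in\cQ_p$ is a rational symmetric matrix with $\xx^TQ\xx=p\equiv g\bmod f$. Because $Q^\star\in\cQ_p$ as well (by construction of $q$, after possibly adjusting so that $q^\star$ and $q$ give the same affine space — here one must be slightly careful) and orthogonal projection onto a convex set is non-expansive, $\|Q-Q^\star\|=\|\pi_p(\widetilde Q)-\pi_p(Q^\star)\|\le\|\widetilde Q-Q^\star\|<\varepsilon/2<\varepsilon$, so $Q$ is positive definite. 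Finally apply Remark~\ref{rem:LDL}: an exact $LDL^T$-decomposition of $Q$ over $\Q$ gives $\omega_i>0$ rational (the diagonal of $D$) and $[h_1,\dots,h_n]=\xx^TL$ rational of degree $<n$, with $h=\sum_i\omega_ih_i^2=\xx^TQ\xx\equiv g\bmod f$.

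The main subtlety — and the step I would be most careful about — is ensuring that $Q^\star$ and the rational matrix $\widetilde Q$ (before projection) can be taken to lie on, or to project onto, \emph{the same} affine space $\cQ_p$ with $p\in\Q[x]$. The affine space $\cQ_{g-q^\star f}$ attached to the real certificate depends on the real polynomial $q^\star$, which need not be rational. The clean fix is to not use $q^\star$ at all: define $p$ intrinsically from the rational data by taking $p$ to be the unique polynomial of degree $\le 2n-2$ with $p\equiv g\bmod f$ that is ``closest'' in the relevant sense, or simply observe that $\cQ_{p}$ for $p=g-qf$, as $q$ ranges over $\R[x]_{n-2}$, sweeps out exactly the set of $Q\in S^n(\R)$ with $\xx^TQ\xx\equiv g\bmod f$, so that $Q^\star$ lies in $\cQ_{p_0}$ for $p_0:=\xx^TQ^\star\xx = g-q^\star f$. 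One then approximates $Q^\star$ by a rational $\widetilde Q$ and defines $q\in\Q[x]_{n-2}$, $p:=g-qf$ from $\widetilde Q$ via the reduction $\widetilde p\bmod f$; the point is that $\|\pi_p(\widetilde Q)-Q^\star\|\le\|\widetilde Q-Q^\star\|+\|\pi_p(Q^\star)-Q^\star\|$, and the second term is controlled because $\pi_p(Q^\star)-Q^\star=Q_{p_0-p}=Q_{(\xx^T\widetilde Q\xx-\xx^TQ^\star\xx)\bmod f}$ has norm $O(\|\widetilde Q-Q^\star\|)$ by the bound \eqref{eq:bound} together with \eqref{eq:boundpol}. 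Choosing $\widetilde Q$ close enough to $Q^\star$ makes the total distance $<\lambda_{\min}(Q^\star)$, which is all that is needed. The rest is bookkeeping: tracking degrees ($\deg h_i<n$, $\deg(h)\le 2n-2$, $\deg q\le n-2$) and verifying rationality at each stage, all of which is routine given the machinery already set up.
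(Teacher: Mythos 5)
Your argument is correct and is essentially the paper's own proof of Proposition~\ref{prop:existence}: reduce $g$ modulo $f$, take the interior point $(Q^*,q^*)$ from Corollary~\ref{cor:L}, round to rational data, project back onto $\cQ_{g-qf}$ with the map $\pi_{g-qf}$ of Lemma~\ref{lem:proj}, bound $\|Q-Q^*\|$ by non-expansiveness of the projection plus the perturbation of the affine space against the smallest eigenvalue $\sigma$ of $Q^*$, and extract the rational weights and polynomials via the $LDL^T$ factorization of Remark~\ref{rem:LDL}. The only (cosmetic) difference is that the paper rounds $q^*$ directly to a rational $q$ with the explicit tolerance $\|q-q^*\|<\sigma/(2(n-1)\|f\|)$, so the second term of the triangle inequality is exactly $\|Q_{(q^*-q)f}\|\le (n-1)\|q-q^*\|\,\|f\|$; your variant, which derives $q$ from $\widetilde Q$ by division by $f$, also works (the discrepancy is the quotient part $(q^*-q)f$, not a remainder modulo $f$ as you wrote, but it is still $O(\|\widetilde Q-Q^*\|)$ because division by $f$ is a fixed linear map).
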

\begin{proof}

There is a natural proof of this proposition which makes use of the fact that the set 
$$\{(A,b)\in S^n(\R)\times \R[x]_{n-2}: \, g=\xx^TA\xx+b\,f \} $$ is a real
affine space which in the case that   $f,g\in \Q[x]$ is defined by a rational basis and a rational particular point. This approach  
follows   the proof of the analogous result for the global case mentioned as  {\em image representation} in \cite[Section 3.2]{PeyrlParrilo}.

Here, we give the proof that  uses the orthogonal projection $\pi_p$ defined in Definition \ref{def:pi_p}, as done for the global case in the  {\em kernel representation}  in \cite[Section 3.1]{PeyrlParrilo}.

\smallskip

Without loss of generality we can assume that $\deg(g)<n$ by replacing it by its remainder modulo $f$.

Let $(Q^*, q^*)$ be given by  Corollary~\ref{cor:L}, i.e. $g=\xx^TQ^*\,\xx+q^*f$ and $Q^*\in \Int(S_+^n(\R))$,  and let $\sigma>0$ be the smallest eigenvalue of $Q^*$, which is the distance of $Q^*$ to the set of singular matrices, so that the open ball centered at $Q^*$ and of radius $\sigma$ is contained in $S_+^n(\R)$.

Take a rational approximation $(\overline Q,  q)\in S^n(\Q)\times \Q[x]_{n-2}$ such that 
\begin{align}\label{eq:approxbound}
\|\overline Q- Q^*\|< \frac{\sigma}{2} \quad \mbox{and} \quad \| q-q^*\|<\dfrac{\sigma}{2(n-1)\|f\|}.  \end{align}

The problem is that most surely, $\xx^T \overline Q \xx +   q\,f \ne g $.

Let  $e:={\bf x}^T {\overline Q} {\bf x} +qf-g$ be the error polynomial, and define 
$$
Q:=\pi_{g-qf}(\overline Q)= \overline{Q}-Q_e\in S^n(\Q),
$$ which is   the orthogonal projection of $\overline Q$ on  $\cQ_{g-qf}$ according to Lemma~\ref{lem:proj}. 
Then $Q\in \cQ_{g-qf}$, i.e. ${\bf x}^TQ{\bf x}+qf=g$. 

\smallskip
Next we  prove that $Q\in \Int(S_+(\Q))$ by proving that $\|Q-Q^*\|<\sigma$. We have 
\begin{align*}
 \| Q-Q^*\| &\le  \|\pi_{g-q\, f}(\overline Q)-\pi_{g-q\, f}({Q^*})\| + \|\pi_{g-q\, f}(Q^*)-Q^*\|\\
 &\le  \|\pi_{g-q\, f}(\overline Q)-\pi_{g-q\, f}({Q^*})\| + \|\pi_{g-q\, f}(Q^*)-\pi_{g-q^*\, f}(Q^*)\|\\
 &\le  \|\overline Q-{Q^*}\| + \|Q^*-Q_{\xx^T Q^* \xx-(g-q\, f)}-\big(Q^*-Q_{\xx^T Q^*\xx-(g-q^*\, f)}\|\\
 & \le  \|\overline Q-{Q^*}\| + \|Q_{(q^*-q)\, f}\|  ,
\end{align*}
since $\xx^TQ^*\xx+q^*g=g$ implies $Q^*=\pi_{g-q^*f}(Q^*)$.\\
By \eqref{eq:bound} and \eqref{eq:boundpol} we have 
\begin{align*} \|Q_{(q^*-q)\, f}\| \le  \|(q^*-q)\, f\|  \le (n-1) \|q^*-q\| \, \|f\| \end{align*}
since $\deg(q^*-q)\le 2n-2$.
Finally, by \eqref{eq:approxbound}, we conclude
$$\|Q- Q^*\| < \frac{\sigma}{2} + (n-1) \frac{\sigma}{2(n-1)\|f\|} \, \|f\|= \sigma.
$$
This implies that 
$ Q\in \Int(S_+^n(\R))$, i.e. $ h=\xx^T Q \,\xx$ is a rational positive weighted sum of squares.
\end{proof}

\begin{example} \label{ex:toy1} We now consider a toy example to illustrate our construction. This is a toy example because in this case we know the roots of $f$ and use that knowledge, as in the proof or our existential theorem. 

\smallskip \noindent 
Let $f=x^3-2=(x-{2^{1/3}})(x-2^{1/3}\omega)(x-2^{1/3}\overline\omega)$, where $\omega=e^{2\pi \ib/3}$,  and $g=x$, which is strictly positive at $2^{1/3}$. Set $\xi_1=2^{1/3}$, $\xi_2=2^{1/3}\omega$ and $\xi_3=\overline{\xi_2}$.

Parrilo's construction \eqref{eq:parrilo pol} gives in this case the following real polynomial, which is congruent to $g$ modulo $f$ and a sum of 2 squares:
$$g(\xi_1){\underbrace{u_1(x)}_{\in\R[x]}}^2+{\underbrace{\left(\sqrt{g(\xi_2)} u_2(x)+\sqrt{g(\xi_3)}u_3(x)\right)}_{\in \R[x]}}^2\ = \  \xx^T Q^* \,\xx
$$
where $$Q^*=    {\left[\begin{matrix} \frac{2\sqrt[3]{2}}{9}&\frac{2}{9}&-\frac{\sqrt[3]{4}}{18}\\[1mm]
\frac{2}{9}&\frac{\sqrt[3]{4}}{18} &-\frac{\sqrt[3]{2}}{18}\\[1mm]
-\frac{\sqrt[3]{4}}{18}&-\frac{\sqrt[3]{2}}{18} &\frac{5}{18}\\
\end{matrix}\right]} .$$
Note that $Q^*$ is a rank 2 positive semidefinite   matrix, which therefore lies in the border of the cone of positive semidefinite matrices. 

Now, if we take $\lambda:=2|g(\xi_2)|= 2\cdot 2^{1/3}$ in our construction \eqref{eq:psos}, we get 
$h^*=  \xx^TQ^*\, \xx$ where 
$$Q^*=\left[\begin{matrix} \frac{4\sqrt[3]{2}}{9}&\frac{1}{9}&-\frac{\sqrt[3]{4}}{9}\\[1mm]
\frac{1}{9}&\frac{2\sqrt[3]{4}}{9} &-\frac{\sqrt[3]{2}}{9}\\[1mm]
-\frac{\sqrt[3]{4
}}{9}&-\frac{\sqrt[3]{2}}{9} &\frac{7}{18}\\
\end{matrix}\right]$$ 
is a (rank 3) definite positive matrix with smallest eigenvalue $\sigma\sim 0.2239$, and 
$$g=\xx^T Q^*\,\xx + q^* \,f \quad \mbox{for} \quad q^*= -\frac{7}{18}x+\frac{2\sqrt[3]{2}}{9}.$$
Here, if we take the following rational approximations of $Q^*$ and $q^*$ (rounding to two significant digits)
$$\overline Q= \left[\begin{matrix}0.6& 0.1& -0.2\\ 0.1& 0.4& -0.1\\-0.2& -0.1& 0.4
\end{matrix}\right] \quad \mbox{ and } \quad  q=-0.4x+0.3 $$
 we get that
$\|Q^*-\overline Q\|\cong 0.0923$  and $\|q^*-{q}\|\cong 0.0229$. Thus, we have  $\|Q^*-\overline Q\|<\frac{\sigma}{2}\cong 0.112$ and $\|q^*-{q}\|<\frac{\sigma}{2(n-1)\|f\|}\cong0.025$ respectively, satisfying both of the  bounds given in \eqref{eq:approxbound}  required in the proof of Proposition~\ref{prop:existence}.  
We have $\xx^T \overline Q\,\xx +  q\,f= 0.1x^3+x \ne g$, with error 
 $$e=\xx^T \overline Q\,\xx+ q\, f-g=0.1x^3.$$
 Compute the orthogonal projection of $\overline Q$ on $ \cQ_{g-qf}$:
$$ Q =\pi_{g-qf}(\overline Q)=\overline Q - Q_e= \left[\begin{matrix}0.6& 0.1& -0.2\\ 0.1& 0.4& -0.15\\-0.2& -0.15& 0.4
\end{matrix}\right]$$ so that $\xx^T \overline Q \,\xx-\xx^T Q \,\xx= e$ and $ Q$ is still a definite positive matrix.
Then matrix $ Q\in S^3(\Q)$ satisfies 
$$ h:=\xx^T  Q \,\xx = \xx^T  \overline Q \,\xx-e= g- q\, f \equiv g \mod f,$$
and $  h$ is a sum of squares of rational polynomials, which we can obtain applying the square-root-free Cholesky decomposition of $Q$ (Remark~\ref{rem:LDL}) as follows:
$$
h=\frac{3}{5}\left(1 + \frac{1}{6}x - \frac{1}{3}x^2\right)^2+ \frac{23}{60}\left( x - \frac{7}{23}x^2\right)^2+\frac{137}{460} x^4.
$$
\end{example}

\subsection{The general case}\label{sub:2}

In this subsection we generalize the results of the previous section to the case when $f$ is non-necessarily squarefree and  $g$ is non-negative at  all the real roots of $f$ (but might vanish on some of them), as long as $\gcd(f,g)$ and   $f/\gcd(f,g)$   are relatively prime, in order  to obtain our main theorem.

We will need the following auxiliary results, namely Hensel lemma and Chinese remainder theorem.

\begin{lemma}
 \label{lem:Hensel} Let $p,g\in \Q[x]$ with $p$  irreducible in $\Q[x]$ which does not divide $g$. Assume that there exists $\overline{h}_1, \ldots, \overline{h}_N\in \Q[x]$ and $\omega_1\ldots, \omega_N\in \Q_+$ for some $N\in \N$  with $\deg(\overline{h}_i)<\deg(p)$ such that 
 $$
 g\equiv \sum_{i=1}^N\omega_i \overline{h}_i^2 \mod p.
 $$
 Then for any fixed  $e\in \N$, $e\geq 1$, there exist $h_1, \ldots, h_N\in \Q[x]$ with $\deg(h_i)<e\cdot\deg(p)$ such that 
 $$
 g\equiv \sum_{i=1}^N \omega_ih_i^2 \mod p^{e}.
 $$
\end{lemma}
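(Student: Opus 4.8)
The plan is to lift the SOS congruence from modulus $p$ to modulus $p^e$ by a Hensel-type iteration, doubling (or incrementing) the exponent at each step while keeping the weights $\omega_i$ fixed and only updating the polynomials $h_i$. First I would set $h_i^{(1)}:=\overline h_i$ and write $g-\sum_i\omega_i(h_i^{(1)})^2=p\cdot r_1$ for some $r_1\in\Q[x]$. The idea is then to look for a correction of the form $h_i^{(2)}:=h_i^{(1)}+p\,\delta_i$ so that $g\equiv\sum_i\omega_i(h_i^{(2)})^2\bmod p^2$; expanding, the quadratic term $p^2\delta_i^2$ is killed modulo $p^2$, so the requirement becomes the \emph{linear} congruence
\begin{equation*}
\sum_{i=1}^N 2\,\omega_i\,h_i^{(1)}\,\delta_i \;\equiv\; r_1 \mod p.
\end{equation*}
Since $p$ is irreducible and (as I will check) $p$ does not divide all of the $h_i^{(1)}$ simultaneously — indeed $\sum_i\omega_i(h_i^{(1)})^2\equiv g\not\equiv0\bmod p$ with $\omega_i>0$ forces some $h_i^{(1)}\not\equiv0\bmod p$, hence invertible modulo $p$ because $\Q[x]/(p)$ is a field — one can solve this: pick an index $i_0$ with $h_{i_0}^{(1)}$ invertible mod $p$, set $\delta_i=0$ for $i\neq i_0$ and $\delta_{i_0}:=r_1\,(2\omega_{i_0}h_{i_0}^{(1)})^{-1}\bmod p$, reduced to degree $<\deg(p)$.

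Iterating this, at the $j$-th stage I would have $h_i^{(j)}$ with $g\equiv\sum_i\omega_i(h_i^{(j)})^2\bmod p^{2^{j-1}}$ (a doubling scheme) or $\bmod p^{j}$ (a linear scheme); either suffices, so I would use whichever is cleaner — the linear one, updating $h_i^{(j+1)}=h_i^{(j)}+p^{j}\delta_i^{(j)}$ with $\deg\delta_i^{(j)}<\deg(p)$, which keeps $\deg(h_i^{(j+1)})<(j+1)\deg(p)$. After $e-1$ steps one reaches $g\equiv\sum_i\omega_i(h_i^{(e)})^2\bmod p^{e}$ with $\deg(h_i^{(e)})<e\deg(p)$, as required. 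Throughout, the weights $\omega_i\in\Q_+$ are untouched, so positivity of the weighted SOS is preserved, and all arithmetic stays in $\Q[x]$.

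There is one subtlety worth isolating: at each lifting step I must re-verify that the newly obtained $h_i^{(j+1)}$ still do not all vanish modulo $p$, so that the linear congruence remains solvable at the next step. But this is immediate, because $\sum_i\omega_i(h_i^{(j+1)})^2\equiv g\bmod p^{j+1}$ implies the same congruence mod $p$, and $g\not\equiv0\bmod p$ by hypothesis (as $p\nmid g$); combined with $\omega_i>0$ this again forces some $h_i^{(j+1)}$ to be invertible mod $p$. So the invariant propagates automatically.

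The main obstacle — really the only place any thought is needed — is the solvability of the linear congruence $\sum_i 2\omega_i h_i^{(j)}\delta_i\equiv r_j\bmod p$, and this rests entirely on $\Q[x]/(p)$ being a field (irreducibility of $p$) together with the fact that not every $h_i^{(j)}$ is divisible by $p$, which the hypothesis $p\nmid g$ and $\omega_i>0$ guarantee. Everything else is bookkeeping: expanding the square, tracking that the quadratic correction term has order $\geq 2j\geq j+1$ in $p$, and tracking degrees. I would present the argument as a single induction on $e$ with the congruence mod $p^{e}$ and the degree bound $\deg(h_i)<e\deg(p)$ as the combined induction hypothesis, the base case $e=1$ being the given data.
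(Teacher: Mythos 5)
Your proposal is correct and follows essentially the same route as the paper: both exploit that, since $p$ is irreducible and $p\nmid g$, some $\overline h_{i_0}$ is a unit modulo $p$, and then Hensel-lift only that one square while keeping the weights and the remaining $\overline h_i$ fixed. The only real difference is that you use a linear-step lifting (solving a linear congruence modulo $p$ each time the exponent is incremented), whereas the paper runs the quadratically convergent Newton iteration $h_1^{(k+1)}\equiv\frac{1}{2}\bigl(h_1^{(k)}+\overline g/h_1^{(k)}\bigr) \bmod p^{2^{k+1}}$ for the square root of $\overline g:=\bigl(g-\sum_{i\ge 2}\omega_i\overline h_i^{2}\bigr)/\omega_1$.
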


\begin{proof}
We show that it suffices to perform Hensel lifting on one of the polynomials $\overline{h}_i$.
Since $p\in \Q[x]$ is irreducible and does not divide $g$, one of the $\overline{h}_i$ at least is not divisible by  $p$, and w.l.o.g. we assume that it is $\overline{h}_1$. 

Define 
$$ \overline g = \frac{g}{\omega_1} \ \mbox{ for } \ N=1 \quad \mbox{and} \quad 
\overline{g}:=\frac{g- \sum_{i=2}^N \omega_i\overline{h}_i^2}{\omega_1} \in \Q[x] \ \mbox{ for } \ N>1.
$$
Then $
\overline{g}\equiv \overline{h}_1^2 \mod p
$, and 
we define the following Newton iteration starting from $h_1^{(0)}:=\overline{h}_1$:
\begin{equation}\label{def:newton} h_1^{(k+1)}\equiv \frac{1}{2}\Big( h_1^{(k)} + \frac{\overline g}{h_1^{(k)}} \Big) \equiv \frac{1}{2}\Big( h_1^{(k)} + s_1^{(k)}\,\overline g \Big) \mod p^{2^{k+1}} \quad  \mbox{for } k\ge 0,\end{equation}
where $s_1^{(k)}\in \Q[x]$ is defined by $s_1^{(k)}\, h_1^{(k)}\equiv 1 \mod p^{2^{k+1}}$.

First note that this sequence is well defined in $\Q[x]$ since by induction, $$h_1^{(k)} \equiv \frac{1}{2}\Big( h_1^{(0)} + \frac{\big(h_1^{(0)}\big)^2}{h_1^{(0)}} \Big)\  \equiv \  h_1^{(0)} \mod p ,$$  and therefore $h_1^{(k)}$ is prime to the irreducible polynomial $p$ since $h_1^{(0)}$ is, and hence invertible modulo $p^{2^{k+1}}$.

We now prove by induction that $\big(h_1^{(k)})^2\equiv \overline g \mod p^{2^k}$:

First, from \eqref{def:newton} we derive 
\begin{equation}\label{id:newton2}h_1^{(k)}\,h_1^{(k+1)}\equiv \frac{1}{2}\Big( \big(h_1^{(k)})^2 + \overline g \Big) \mod p^{2^{k+1}} .\end{equation}

Now,
by the inductive hypothesis, $\big(h_1^{(k)}\big)^2  \equiv \overline g \mod p^{2^k}$  implies that 
    
\begin{align*}  h_1^{(k+1)}&\equiv \frac{1}{2}\Big( h_1^{(k)} + s_1^{(k)}\,\big(h_1^{(k)}\big)^2 \Big) \mod p^{2^{k}}\\
& \equiv \frac{1}{2}\Big( h_1^{(k)} + h_1^{(k)}  \Big)\ \equiv \ h_1^{(k)} \mod p^{2^{k}}.\end{align*}
Therefore,
$h_1^{(k)} = h_1^{(k+1)} + t\,p^{2^k}$ for some $t\in \Q[x]$ and from \eqref{id:newton2},
\begin{align*}\big(h_1^{(k+1)}+ t\,p^{2^k}\big)h_1^{(k+1)}&\equiv \frac{1}{2}\Big( \big(h_1^{(k+1)}+t\,p^{2^k}\big)^2 + \overline g \Big) \mod p^{2^{k+1}} \\ 
&\equiv \frac{1}{2}\Big( \big(h_1^{(k+1)}\big)^2+2t\,p^{2^k}h_1^{(k+1)} + \overline g \Big)  \mod p^{2^{k+1}} \end{align*}
and we can cancel $t\,p^{2^k}h_1^{(k+1)}$ from both sides. We conclude
$$\big(h_1^{(k+1)})^2\equiv \frac{1}{2}\Big( \big(h_1^{(k+1)}\big)^2 + \overline g \Big) \mod p^{2^{k+1}}$$
which implies $$\big(h_1^{(k+1)}\big)^2\equiv \overline g \mod p^{2^{k+1}}.$$
Going back to the definition of $\overline g$, 
$$ g=  \omega_1\overline g +  \sum_{i=2}^N \omega_i\overline{h}_i^2 \equiv
\omega_1\big(h_1^{(k)}\big)^2+ \sum_{i=2}^N \omega_i\overline{h}_i^2 \ \mod p^{2^k}. $$
Finally, if we choose $k$ such that $2^{k-1}<e\leq 2^k$ and define $h_1:=h_1^{(k)} \mod p^e$, $h_i:=\overline{h}_i$  for $2\le i\le  N$ and $\omega_1,\ldots,  \omega_N$ unchanged then  we get the sum of squares decomposition of $g$ modulo $p^e$ with the desired  degree bounds. 
\end{proof}

\begin{lemma} \label{lem:CRT}
Let $f_1, \ldots, f_r\in \Q[x]$ with $\gcd(f_i,f_j)=1$ for  $1\leq i<j\leq r$.
Assume that $g\in \Q[x]$ satisfies 
$$
g\equiv \sum_{j=1}^{N_i} \omega_{i,j}h_{i,j}^2 \mod f_i, \quad  1\le i\le r,
 $$
for some $N_i\in \N$, $h_{i,j}\in \Q[x]$ with $\deg(h_{i,j})<\deg f_i$ and $\omega_{i,j}\in \Q_+$,  for $1\le j\le  N_i$.  Then there exist $N\in \N$, $h_1, \ldots, h_N\in \Q[x]$ and $\omega_1, \ldots, \omega_N\in \Q_+$ such that 
$$
g\equiv \sum_{i=1}^{N} \omega_ih_{i}^2 \mod \Big(\prod_{i=1}^r f_i\Big).
$$
Furthermore,  $\deg(h_i)<\sum_{i=1}^r \deg(f_i)$ for $1\le i\le  N$.
\end{lemma}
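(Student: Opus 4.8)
The plan is to prove Lemma~\ref{lem:CRT} by induction on $r$, the base case $r=1$ being trivial. For the inductive step it suffices to treat the case $r=2$: given $g\equiv \sum_{j=1}^{N_1}\omega_{1,j}h_{1,j}^2 \bmod f_1$ and $g\equiv \sum_{j=1}^{N_2}\omega_{2,j}h_{2,j}^2 \bmod f_2$ with $\gcd(f_1,f_2)=1$, one produces a rational weighted SOS decomposition modulo $f_1f_2$, then feeds the pair $(f_1f_2, f_3)$ back into the $r=2$ case, and so on. So I focus on $r=2$.

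Since $\gcd(f_1,f_2)=1$, the Chinese Remainder Theorem gives idempotents: there exist $a_1,a_2\in\Q[x]$ with $a_1+a_2\equiv 1\bmod f_1f_2$, $a_1\equiv 1\bmod f_1$, $a_1\equiv 0\bmod f_2$, and symmetrically for $a_2$; concretely $a_1 = c_2 f_2$ and $a_2 = c_1 f_1$ where $c_1 f_1 + c_2 f_2 = 1$ is a B\'ezout relation. The first idea would be to form $\sum_j \omega_{1,j}(a_1 h_{1,j})^2 + \sum_j \omega_{2,j}(a_2 h_{2,j})^2$, which modulo $f_i$ reduces to $\sum_j \omega_{i,j} h_{i,j}^2 \cdot a_i^2 \equiv g\cdot a_i^2\bmod f_i$ since $a_i\equiv 1\bmod f_i$. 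That gives $g\, a_i^2$ rather than $g$ modulo each $f_i$, which is not quite right. The clean fix is to use that $a_i$ is \emph{idempotent} modulo $f_i f_2$: note $a_1^2\equiv a_1\bmod f_1f_2$ because $a_1^2-a_1 = a_1(a_1-1)$ is divisible by both $f_1$ (as $a_1-1\equiv 0$) and $f_2$ (as $a_1\equiv 0$), hence by $f_1f_2$. Therefore modulo $f_1f_2$ we may replace each occurrence of $a_1$ by $a_1^2$, and indeed $\sum_j \omega_{1,j}(a_1 h_{1,j})^2 = a_1^2 \sum_j \omega_{1,j} h_{1,j}^2 \equiv a_1^2\, g \equiv a_1\, g \bmod f_1f_2$ — wait, that still leaves an $a_1$. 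The correct route is instead: modulo $f_1f_2$ we have $g \equiv (a_1+a_2)g \equiv a_1 g + a_2 g$, and $a_i g \equiv a_i^2 g \equiv a_i\big(\sum_j \omega_{i,j}h_{i,j}^2\big) \bmod f_1f_2$? No. Let me restate what actually works, which is the standard construction: set
$$
h := \sum_{i=1}^{2}\sum_{j=1}^{N_i} \omega_{i,j}\,\big(a_i\, h_{i,j}\big)^2.
$$
Modulo $f_1$: $a_2\equiv 0$ kills the $i=2$ terms, and $a_1\equiv 1$ gives $h \equiv \sum_j \omega_{1,j} h_{1,j}^2 \equiv g$; symmetrically modulo $f_2$, $h\equiv g$. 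Since $\gcd(f_1,f_2)=1$, these two congruences combine via CRT to $h\equiv g \bmod f_1f_2$, which is exactly what we want; no idempotency gymnastics beyond the $a_i\bmod f_i$ reductions are needed. The weights $\omega_{i,j}\in\Q_+$ are reused unchanged, so positivity is immediate, and $N := N_1+N_2$.

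The remaining point is the degree bound. The polynomials $a_i h_{i,j}$ have degree potentially much larger than $\deg(f_1)+\deg(f_2)$, so I would replace each $h_i$ (in the final indexing) by its remainder modulo $f_1f_2$: reducing $a_i h_{i,j} \bmod f_1 f_2$ does not change $h \bmod f_1f_2$ and forces $\deg(h_i) < \deg(f_1f_2) = \deg(f_1)+\deg(f_2)$. Running the induction, after folding in $f_3,\dots,f_r$ one reduces modulo $\prod_{i=1}^r f_i$ at the end, giving $\deg(h_i) < \sum_{i=1}^r \deg(f_i)$ as claimed. I do not foresee a genuine obstacle here — the only thing to be careful about is getting the idempotent bookkeeping right (namely that $a_i h_{i,j}$, not $a_i^2 h_{i,j}$ or $h_{i,j}$ alone, is the right thing to square), and remembering to reduce modulo the product at the end to control degrees; everything else is routine CRT manipulation and the positivity of weights is preserved trivially.
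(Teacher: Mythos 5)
Your final construction is exactly the paper's: your idempotents $a_i$ are the paper's $s_i f^{(i)}$ (with $s_i f^{(i)} + t_i f_i = 1$), and forming $\sum_{i,j}\omega_{i,j}(a_i h_{i,j})^2$ with unchanged weights and then reducing modulo the product is precisely the paper's argument, the only (immaterial) organizational differences being that the paper treats all $r$ factors at once via the explicit CRT formula together with the idempotency $s_i f^{(i)}\equiv (s_i f^{(i)})^2 \bmod f$, whereas you induct on $r$ and verify the congruence modulo each $f_i$ separately. The mid-proof detour about $a_i^2 g$ is harmless since you discard it and land on the correct verification, so the proposal is correct.
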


\begin{proof} The usual Chinese remainder theorem for a system 
$$g\equiv g_i \mod f_i, \ 1\le i\le r,$$ 
admits the solution (c.f. \cite[Algorithm 5.4]{MCA})
$$g\equiv s_1f^{(1)}g_1+\cdots + s_r f^{(r)} g_r\mod f$$  
where 
$f:=\prod_{i=1}^r f_i$,  $f^{(i)}:=\prod_{j\ne i}f_j $ 
and $s_i$ is defined by $s_if^{(i)}+t_if_i=1$  for $1\le i\le r$.

On another side, notice that $$s_i f^{(i)} \equiv (s_if^{(i)})^2 \mod f, \ 1\le i\le r,$$ since $s_if^{(i)}\equiv 1 \mod f_i$ and $s_if^{(i)}\equiv 0 \mod f_j$ for $j\ne i$. Then 
$$g\equiv (s_1f^{(1)})^2g_1+\cdots + (s_rf^{(r)})^2g_r \mod f.$$
In our setting, since 
$g_i:= \sum_{j=1}^{N_i} \omega_{i,j}h_{i,j}^2$ we get
\begin{align*}g&\equiv \sum_{i=1}^r (s_if^{(i)})^2\Big(\sum_{j=1}^{N_i} \omega_{i,j}h_{i,j}^2\big) \mod f\\
&\equiv \sum_{i=1}^r \sum_{j=1}^{N_i} \omega_{i,j}( s_if^{(i)}h_{i,j})^2 \mod f.
\end{align*}
We get  $N:=\sum_{i=1}^r N_i$ and reduce $s_if^{(i)}h_{i,j}$ modulo $f$ to achieve the desired degree bounds in the SOS decomposition.  
\end{proof}

We are now able to prove the full version of our theorem. We repeat the statement here for the reader's convenience.

\smallskip
\noindent{\bf Theorem.} {\em  Let $f\in \Q[x]$ be a non-zero   polynomial  of degree $n$ and $g\in \Q[x]$ be such that
$\gcd(f,g)$ and $f/\gcd(f,g)$ are relatively prime. Assume that 
$g$ is non-negative at all the real roots of $f$. Then there exist   polynomials ${h}_i\in \Q[x]$ of degree $< n$ and  positive weights  ${\omega}_i\in \Q_{+}$, $1\le i \le N$ for some $N\in \N$, such that 
$$
h:=\sum_{i=1}^N{\omega}_i \,{h}_i^2 \quad\mbox{satisfies} \quad h \equiv g \mod f.
$$}

\smallskip

\begin{proof}
First assume that $\gcd(f,g)=1$. Note that therefore, the assumption that $g$ is non-negative at the real roots of $f$ implies that  $g$ is strictly positive at the real roots of $f$. 

W.l.o.g. we can assume that $f$ is monic. Suppose $f$ has the following decomposition over $\Q$ into powers of irreducible factors in $\Q[x]$
$$
f=p_1^{e_1}\cdots p_r^{e_r}
$$ 
where $p_i$ are distinct monic irreducible polynomials in $\Q[x]$,  $e_i\in \N$ for $1\le i\le  r $, and $\sum_{i=1}^r e_i\deg(p_i)=n$. 

Fix $i\in \{1, \ldots, r\}$. Since $g$ is strictly positive at the real roots of the irreducible polynomial $p_i$, we can apply  Proposition \ref{prop:existence} to  $p_i$ and $g$, which shows  the existence of   $\overline{h}_{i,j}\in \Q[x]$ of degree $< N_i:=\deg(p_i)$ and   ${\omega}_{i,j}\in \Q_{+}$ for $1\le i \le N_i$, such that 
$$
g\equiv   \sum_{j=1}^{N_i} {\omega}_{i,j} \,\overline{h}_{i,j}^2 \mod p_i.
$$
Next we apply  Lemma \ref{lem:Hensel} with $p=p_i$ and $e=e_i$ to show the existence of    ${h}_{i,j}\in \Q[x]$ of degree $< e_i\deg(p_i)$, $1\le i \le N_i$, such that 
\begin{equation}\label{eqn:qiei}
 g \equiv \sum_{j=1}^{N_i} {\omega}_{i,j} \,{h}_{i,j}^2  \mod p_i^{e_i} .
\end{equation}
Finally we apply Lemma \ref{lem:CRT} with $f_i=p_i^{e_i}$ for $1\le i\le r$ to combine the congruences in \eqref{eqn:qiei} and obtain $N\in \N$, $h$, $h_1, \ldots, h_N\in \Q[x]$ and $\omega_1, \ldots, \omega_N\in \Q_+$ such that $$
h:=\sum_{i=1}^N{\omega}_i \,{h}_i^2 \quad\mbox{satisfies} \quad h \equiv g \mod f.
$$
Furthermore,  $\deg(h_i)<\sum_{i=1}^re_i\deg(p_i)=n$ for $1\le i\le  N$. This proves the claim for an arbitrary polynomial $f$ with  $\gcd(f,g)=1$.

\smallskip
Assume now that $d:=\gcd(f,g)\neq 1$. We  show that under our assumption $\gcd(f/d,d)=1$, there is a polynomial $b\in \Q[x]$ relatively prime to $f/d$ which satisfies that  $b\,d^2\equiv g \mod f$ and therefore $b$ is strictly positive at the real roots of $f/d$:

The assumption implies that $\gcd (f/d,g)=1$, and therefore $g$ is strictly positive at the real roots of $f/d$. Since  $\gcd(f/d,d^2)=1$ as well, there exist $s,t\in \Q[x]$ s.t. 
$$ 
1=s\cdot \frac{f}{d} + t\cdot d^2.
$$
This implies in particular $\gcd(f/d,t)=1$ and that 
\begin{equation}\label{eq:tg}g= s\cdot \frac{g}{d}\cdot f + (tg) \cdot d^2.\end{equation}
We set $b:=tg$. Then $b$ and $f/d$ are relatively prime since $t$ and $f/d$, and $g$ and $f/d$ are. 
Therefore $b$ is striclty positive at  the real roots of $f/d$ because for any such root $\xi$, $d(\xi)\ne 0$, $b(\xi)\ne 0$ and $b(\xi)d^2(\xi)=g(\xi)\ge 0$. 

Finally, \eqref{eq:tg} implies that $b\,d^2\equiv g \mod f$. 

We then apply our previous construction to $b$ and $f/d$: There exist     $\overline{h}_i\in \Q[x]$ of degree $< n-\deg(d)$ and   ${\omega}_i\in \Q_{+}$, $1\le i \le N$, such that 
$$\overline  h:=\sum_{i=1}^N {\omega}_i \overline {h}_i^2 \quad\mbox{satisfies} \quad  
\overline  h \equiv b \mod \frac{f}{d} .
$$
Therefore,
$$ d^2\,\overline h=\sum_{i=1}^N {\omega}_i \,(d\,\overline h_i)^2 \quad\mbox{and}\quad 
d^2 \overline  h \equiv b \,d^2 \mod f.$$
Since $b
\,d^2\equiv g \mod f$ we conclude that
$$
  d^2 \overline  h \equiv g  \mod f.
$$
We note that  $\deg(d\, \overline h_i)<n$, thus $h:=d^2 \overline h$,  $h_i:=d \, \overline h_i$ and $\omega_i$, $1\le i\le N$, satisfy the claim of the theorem.
\end{proof}

\begin{example}
Let us again consider a toy example to show how it works when $\gcd(f,g)\ne 1$ and $f$ is not squarefree.

Consider $f=x(x^3-2)^2$ and  $g=x^3$. Here  $d=\gcd(f,g)=x $ and  $f/d= (x^3-2)^2$ are relatively prime, so we are in the assumptions of our theorem.

In this case, as $g/d^2=x$ is already a polynomial, we can take $tg = g/d^2=x$.
\begin{enumerate}
\item Find rational SOS for $ g/d^2= x$ modulo $(x^3-2)$ (see Example \ref{ex:toy1}):
{\small$$
x\equiv \frac{3}{5}\left(1 + \frac{1}{6}x - \frac{1}{3}x^2\right)^2+ \frac{23}{60}\left( x - \frac{7}{23}x^2\right)^2+\frac{137}{460} x^4 \mod (x^3-2).
$$}
\item Apply Hensel lifting to find rational SOS for $ g/d^2=x$ modulo $(x^3-2)^2$ (note that we lift only the last term):
{\small
\begin{align*}
 x&\equiv \frac{3}{5}\left(1 + \frac{1}{6}x - \frac{1}{3}x^2\right)^2+ \frac{23}{60}\left( x - \frac{7}{23}x^2\right)^2\\
 &+\frac{137}{460} \left(\frac{-46}{137}x^5 + \frac{69}{274}x^4 + \frac{229}{137}x^2 - \frac{69}{137}x\right)^2 \mod (x^3-2)^2,
 \end{align*}}
 i.e. $  x\equiv \omega_1\overline h_1^2+\omega_2\overline h_2^2+\omega_3\overline h_3^2 \mod f/d=(x^3-2)^2$.

\item Multiply both sides by $d^2=x^2$:
$$ g\equiv \omega_1(x\overline h_1)^2+\omega_2(x\overline h_2)^2+\omega_3(x\overline h_3)^2 \mod f=x(x^3-2)^2.$$
\end{enumerate}
\end{example}

\section{The algorithm}\label{sec:algorithm}

In this section, we describe the  algorithm announced in the introduction, that computes a certificate of non-negativity of a polynomial $g\in\Q[x]$ at the real roots of another  polynomial $f\in\Q[x]$.

\subsection{Certificate for a strictly positive polynomial}
Here we assume that $f\in \Q[x]$ is a {\em squarefree} polynomial of degree $n$, and that
 $g$ is {\em strictly positive} at all the real roots of $f$. 
 
{We consider the following optimization problem
\begin{equation}\label{eq:pop}
\begin{array}{cl}
     \max  & \quad \lambda \\
     \mbox{s.t.} & Q - \lambda \, I \succcurlyeq 0\\
          & q\in \R[x]_{n-2}\\
          & g = \xx^T Q\, \xx + q\, f
\end{array}    
\end{equation}
where $\xx=[1,\ldots,x^{n-1}]^T$ is the vector of monomials of degree $< n$ and $Q\in S^n(\R)$. (Here $\succcurlyeq 0$ denotes positive semidefinite.)
It is finding the maximal $\lambda$, which is  bounded from above by all the eigenvalues of symmetric matrices $Q$ satisfying $g = \xx^T Q\, \xx + q\, f$. 
The set 
\begin{align*}
  \cC&=\{ (Q,q)\in S^{n}(\R) \times \R[x]_{n-2}  \ : \ Q\succcurlyeq 0,\ g = \xx^T Q \,\xx + q\, f\}
\end{align*} is convex, as the intersection of the linear space $$\{(Q,q)\in S^n(\R)\times \R[x]_{n-2},\,  g - \xx^T Q \,\xx - q\, f=0\}$$ with the convex cone $S^n_+(\R)\times \R[x]_{n-2}$. If the optimal value $\lambda^*$ of \eqref{eq:pop} is strictly positive, then 
the relative interior of $\cC$ is non-empty. 
}

{
By solving the convex optimization problem \eqref{eq:pop}
using a numerical interior point solver, working at a given precision $\mu$, we obtain an approximation of an interior point of $\cC$ where the objective function reaches its maximum $\lambda^*>0$. This yields a rational  approximation of an interior point $(Q^*,q^*)$ of the convex set $\cC$. That is, the numerical solver computes a (rational) approximate solution $(Q^*,q^*)$ of the optimization problem \eqref{eq:pop}, where if the precision $\mu$ is good enough and $\lambda^*>0$, $Q^*\in S_+^n(\Q)$ is a positive definite matrix  but    
there will be an error polynomial $\xx^TQ^*\xx+q^*f-g\ne 0$,  although close to 0.
}

Since $(Q^*,q^*)$ may have a lot of decimals, in order to obtain a rational decomposition of $g$ modulo $f$ of small size, we start by 
rounding,  at a convenient precision $\delta>0$, $Q^* \in S_+^n(\R)$ to a nearby $\overline{Q}\in S^n(\Q)$ and $q^*\in \R[x]_{\le n-2}$ to a nearby rational polynomial $q \in \Q[x]_{\le n-2}$. We  then compute the projection $Q:=\pi_{g-q\,f}(\overline Q)\in \cQ_{g-q\, f}$ which satisfies  $g = \xx^T Q \,\xx  + q\, f$. 
As in the proof of Proposition~\ref{prop:existence}, if  $\|Q-Q^*\|$ is smaller than the smallest eigenvalue $\sigma$ of $Q^*$, then 
$Q\in S_+^n(\Q)$ is a rational positive definite matrix and $g = \xx^T Q \xx  + q f$ gives a rational SOS decomposition   of $g$ modulo $f$, that is $(\xx^T Q \xx,q)$ is a rational certificate of positivity of $g$ at the real roots of $f$.

\smallskip
Given the approximate solution $(Q^*,q^*)$ output by the numerical solver, we detail in the following proposition a bound on the rounding precision $\delta$ chosen to define $(\overline Q,q)$ needed to guarantee that $Q=\pi_{g-qf}(\overline Q)$ is a positive definite matrix. We assume here that the matrix $Q^*$ output by the solver is positive definite.
\begin{proposition}
Let $\sigma>0$ be the smallest eigenvalue of $Q^*$ and assume that $\rho: = \|{ \xx^T Q^* \xx + q^*\, f - g}\|< \sigma$. Set $$0<\delta < \frac{1}{n + (n-1)\sqrt{n} \, \|f\| } (\sigma - \rho).$$
Then, for any rational approximations $(\overline Q,q)\in S^n(\Q)\times \Q[x]_{n-2}$ of $(Q^*,q^*)$  such that $$|\overline Q_{i,j}-Q_{i,j}^*|\le \delta, \ 1\le i,j\le n \quad \mbox{and} \quad |q_i-q_i^*|\le \delta, \ 0\le i\le n-2,$$ the symmetric matrix $Q =\pi_{g-q f}(\overline Q)\in S^n(\Q)$, which satisfies $g=\xx^TQ\,\xx+q\,f$, is  positive definite.
\end{proposition}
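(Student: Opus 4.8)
The plan is to follow the proof of Proposition~\ref{prop:existence} almost verbatim, the only new feature being that now $(Q^*,q^*)$ is merely an \emph{approximate} solution of \eqref{eq:pop}, so the polynomial $\xx^T Q^*\xx+q^*\,f-g$ need not vanish but only has norm $\rho<\sigma$. As in that proof, Lemma~\ref{lem:proj} gives that $Q=\pi_{g-q\,f}(\overline Q)=\overline Q - Q_{\xx^T\overline Q\,\xx-(g-q\,f)}$ lies in $\cQ_{g-q\,f}$, i.e.\ $g=\xx^T Q\,\xx+q\,f$, and $Q\in S^n(\Q)$ since $\overline Q,q\in\Q$ and $f,g\in\Q[x]$. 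So the whole content is to prove that $Q$ is positive definite, and for this it suffices to show $\|Q-Q^*\|<\sigma$: as in Proposition~\ref{prop:existence}, the open Frobenius ball of radius $\sigma$ around $Q^*$ consists of positive definite matrices (equivalently, for any unit vector $v$, $v^TQv\ge \sigma-\|Q-Q^*\|_2\ge\sigma-\|Q-Q^*\|>0$, using that the spectral norm is bounded by the Frobenius norm $\|\cdot\|$).

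To estimate $\|Q-Q^*\|$ I would split it exactly as in Proposition~\ref{prop:existence}:
\[
\|Q-Q^*\|\le \big\|\pi_{g-q\,f}(\overline Q)-\pi_{g-q\,f}(Q^*)\big\|+\big\|\pi_{g-q\,f}(Q^*)-Q^*\big\|.
\]
The first summand is $\le\|\overline Q-Q^*\|$ because $\pi_{g-q\,f}$ is an orthogonal projection onto an affine space (Lemma~\ref{lem:proj}), hence non-expansive; and $\|\overline Q-Q^*\|\le n\,\delta$, since all $n^2$ entries of $\overline Q-Q^*$ are bounded in absolute value by $\delta$. For the second summand, by definition of $\pi_p$ we have $\pi_{g-q\,f}(Q^*)-Q^*=-Q_{e'}$ with
\[
e':=\xx^T Q^*\,\xx-(g-q\,f)=\big(\xx^T Q^*\,\xx+q^*\,f-g\big)+(q-q^*)\,f ,
\]
so \eqref{eq:bound} (noting $\deg e'\le 2n-2$) and the triangle inequality give $\|Q_{e'}\|\le\|e'\|\le\rho+\|(q-q^*)\,f\|$. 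Finally $\deg(q-q^*)\le n-2$ yields $\|q-q^*\|\le\sqrt{n-1}\,\delta$, whence \eqref{eq:boundpol} gives $\|(q-q^*)\,f\|\le (n-1)\,\|q-q^*\|\,\|f\|\le (n-1)\sqrt{n-1}\,\|f\|\,\delta\le (n-1)\sqrt{n}\,\|f\|\,\delta$.

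Collecting the pieces,
\[
\|Q-Q^*\|\le n\,\delta+\rho+(n-1)\sqrt{n}\,\|f\|\,\delta=\rho+\big(n+(n-1)\sqrt{n}\,\|f\|\big)\,\delta ,
\]
and the hypothesis $\delta<\big(n+(n-1)\sqrt{n}\,\|f\|\big)^{-1}(\sigma-\rho)$ makes the right-hand side $<\rho+(\sigma-\rho)=\sigma$, which is exactly what was needed.

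I do not expect a genuine obstacle here: the argument is a bookkeeping variant of Proposition~\ref{prop:existence}, the single structural change being that one can no longer use $Q^*=\pi_{g-q^*f}(Q^*)$ (the solver is inexact), which is what produces the extra additive term $\rho$. The only care needed is in the norm conversions --- passing from the entrywise rounding bound $\delta$ to the Frobenius norm of $\overline Q-Q^*$ and to the $2$-norm of the coefficient vector of $q-q^*$ (using $\ell^1\le\sqrt{n-1}\,\ell^2$), invoking $\|Q_{e'}\|\le\|e'\|$ from \eqref{eq:bound}, and checking that the harmless replacement of $\sqrt{n-1}$ by $\sqrt{n}$ still yields the clean constant stated in the proposition --- together with recording, as above, why controlling the Frobenius norm of $Q-Q^*$ is enough to guarantee positive definiteness.
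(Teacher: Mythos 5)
Your proof is correct and follows essentially the same route as the paper: the same projection lemma, the same bounds \eqref{eq:bound} and \eqref{eq:boundpol}, with only a cosmetic reorganization (you merge the paper's second and third triangle-inequality terms by decomposing the error polynomial $e'$ directly, and use the slightly sharper $\sqrt{n-1}$ in place of $\sqrt{n}$ before relaxing it back). Your explicit justification that bounding the Frobenius distance by $\sigma$ guarantees positive definiteness is a welcome detail the paper leaves implicit.
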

\begin{proof}
We have
$$\|\overline Q-Q^*\|\le n\, \delta  \quad \mbox{and} \quad \|q-q^*\|\le \sqrt{n}\,\delta.
$$
Then, the distance between $Q=\pi_{g-q\,f}(\overline Q)$ and $Q^*$ can be bounded, as in the proof of Proposition~\ref{prop:existence} but with the difference that $Q^*\ne \pi_{g-q^*f}(Q^*)$, as follows:
\begin{align*}
 \| Q-Q^*\| &\le  \|\pi_{g-q\, f}(\overline Q)-\pi_{g-q\, f}({Q^*})\| + \|\pi_{g-q\, f}(Q^*)-\pi_{g-q^*\, f}(Q^*)\|\\ & \quad + 
 \| \pi_{g-q^*\, f}(Q^*)-Q^*\|\\ & \le  \|\overline Q-{Q^*}\|\\ &  \quad +\|Q^*-Q_{\xx^TQ^*\xx-(g-qf)}- (Q^*-Q_{\xx^TQ^*\xx-(g-q^*f)} \| \\ & \quad +
 \|Q_{ \xx^T Q^* \xx - (g-q^*\, f)}\|\\ 
 &\le  \|\overline Q-{Q^*}\| + \|Q_{(q^*-q)f}\| + \|Q_{\xx^T Q^* \xx - (g-q^*\, f)}\|.
\end{align*}
Using \eqref{eq:bound} and \eqref{eq:boundpol}, as in the proof of Proposition~\ref{prop:existence} we have
$$
\|Q_{(q^*-q)\, f}\| \le  (n-1) \|q^*-q\| \, \|f\|\le (n-1) \sqrt{n} \,\delta \,\|f\|$$ and 
$$
\|Q_{\xx^T Q^* \xx - (g-q^*\, f)}\|  
\le \| \xx^T Q^* \xx + q^*\, f-g\| =\rho.
$$
As $\|\overline{Q}-Q^*\|\le n \,\delta$, we deduce that
\begin{align*}
\|Q- Q^*\| &\le  (n + (n-1)\sqrt{n} \, \|f\|)\, \delta + \rho < (\sigma-\rho)+\rho= \sigma 
\end{align*}
Therefore $Q$ is  positive definite.
\end{proof}

The approximation of $\sigma$ and the norm $\rho$ of the error polynomial $\xx^TQ^*\xx+q^*f-g$, which is approximately $0$, depend  on the precision $\mu$ of the solver. If $\rho> \sigma$, we need to increase the precision $\mu$ of the numerical solver and compute a new solution $(Q^*,q^*)$.

We can now summarize  the certification algorithm for a strictly positive polynomial, in Algorithm \ref{algo:strictpos}, which  is implemented in the function \texttt{exact\_decompose} of Julia package \texttt{MomentTools.jl}\footnote{\tt https://gitlab.inria.fr/AlgebraicGeometricModeling/MomentTools.jl}.
\begin{algorithm}\caption{\label{algo:strictpos}Rational SOS certificate modulo a squarefree polynomial for a strictly positive polynomial}
\textbf{Input:} $f \in \Q[x]_n$ squarefree, $g\in \Q[x]_{n-1}$ such that $g>0$ at the real roots of $f$.

\begin{enumerate}
  \item  $\mu \leftarrow \mu_0$ default precision of the interior point solver.
  \item $(Q^*,q^*) \leftarrow$ solution of the SDP problem \eqref{eq:pop} by the numerical interior point solver working at precision $\mu$;
  \item $\sigma \leftarrow$ smallest eigenvalue of $Q^*$;
  \item $\rho \leftarrow \|{\xx^T Q^* \xx + q^*\, f-g}\|$
  the 2-norm of the error polynomial; 
  \item $\delta \leftarrow \frac{0.99}{n + \sqrt{n} (n-1) \, \|f\| } (\sigma - \rho) $; 
     If $\delta < 0$ then increase precision $\mu\leftarrow 2 \, \mu$ and repeat from step $(1)$;
  \item $\overline{Q}\leftarrow$ round $Q^*$ to rational coefficients, with $\lceil \log_{10} (\delta^{-1})\rceil$ exact digits after decimal point;
  \item $q \leftarrow$ round $q^*$ to rational coefficients, with $\lceil \log_{10} (\delta^{-1})\rceil$ exact digits after decimal point;
  \item $Q\leftarrow \pi_{g-q\,f}(\overline{Q})$;
\end{enumerate}
\textbf{Output:} $(Q,q) \in S^n_{+}(\Q)\times \Q[x]_{n-2}$ such that 
\begin{itemize}
\item $g= \xx^T Q \xx + q\, f$,
\item $Q$ definite positive.
\end{itemize}
\end{algorithm}

\subsection{Certificate for a non-negative polynomial}
We consider now the case where $f$ arbitrary and $g$   non-negative at the real roots of $f$ satisfy the assumption that  $\gcd(f,g)$ and $f/\gcd(f,g)$ are relatively prime. We set $d \assign \gcd(f,g)$. 

We closely follow the proof of our main Theorem in Section~\ref{sec:exist}. We first compute $b\in \Q[x]$ relatively prime to $f/d$ such that $b$ is strictly positive at the real roots of $f/d$ and $b\,d^2\equiv g \mod f$.


We then compute the irreducible factorization of $f/d = \prod_{i=1}^r p_i^{e_ i}$ where the polynomials $p_i\in \Q[x]$ are irreducible, thus with simple roots, and pairwise relatively prime.

We observe that $b$ and $p_i$ are relatively prime, and 
that $b$ is strictly positive on the real roots of $p_i$,  $1\le i\le r$.

We set $b_i$ to be the remainder of $b$ modulo $p_i$, $1\le i\le r$, and we apply Algorithm \ref{algo:strictpos} to  $p_i$ and $b_i$. We get the rational SOS certificate
$$
 b_i = \xx^T Q_i\xx +  q_i\, p_i
$$
where, setting   $n_i:=\deg(p_i)$, $Q_i\in S^{n_i}_+(\Q)$ is positive definite and $q_i\in \Q[x]_{n_i-2}$. 
 We deduce from the square-root-free Cholesky factorisation of $Q_i$ (cf. Remark~\ref{rem:LDL})  an SOS decomposition
$$b_i \equiv \sum_{j=1}^{n_i}\omega_{i,j}\overline h_{i,j}^2 \mod p_i, $$
where $\omega_{i,j} \in \Q_+, \overline h_{i,j}\in \Q[x]$.

Therefore $$b\equiv \sum_{j=1}^{n_i} \omega_{i,j} \overline h_{i,j}^2 \mod p_i, \ \ 1\le i\le r.$$
By Hensel lifting (Lemma \ref{lem:Hensel}), we deduce an SOS decomposition of $b$ modulo $p_i^{e_i}$, and 
by the Chinese Remainder Theorem (Lemma \ref{lem:CRT}), we deduce an  SOS decomposition of $b$ modulo $f/d$:
$$
b \equiv  \sum_{i=1}^N \omega_{i} \overline{h}_{i}^2 \mod {f/d}
$$
with 
$\omega_{i} \in \Q_+, \overline{h}_{i}\in \Q[x]$.
Using that $b\,d^2\equiv g \mod f$, this gives the following SOS decomposition of $g$ modulo $f$:

$$
g \equiv  \sum_{i=1}^N \omega_{i} (d\overline{h}_{i})^2 \mod {f}
$$
and we finally compute  $q\in \Q[x]$ s.t. 
$$ g=\sum_{i=1}^N \omega_{i} (d\overline{h}_{i})^2 + q\,f.$$

This computation is summarized in Algorithm \ref{algo:nonneg}.

\begin{algorithm}\caption{\label{algo:nonneg}Rational SOS certificate for a non-negative polynomial}
\textbf{Input:} $f \in \Q[x]_n$, $g\in \Q[x]$ such that $g\ge 0$ at the real roots of $f$ and $\gcd(f,g)$ and $f/\gcd(f,g)$ are relatively prime.
\begin{enumerate}
  \item $d \leftarrow \makeatletter \gcd(f,g)$;
  \item Compute $b\in \Q[x]$ s.t. $b$ is prime to $f/d$, strictly positive at the real roots of $f/d$ and $b\,d^2\equiv g \mod f$.
  \item Compute the factorization $f/d = \prod_{i=1}^r p_i^{e_{i}}$ into irreducible factors in $\Q[x]$; 
  \item For each irreducible factor $p_i$, \\ $b_i \leftarrow$ the remainder   of $b$ modulo $p_i$; 
  
 $(Q'_i,q'_i)\leftarrow$ output of Algorithm \ref{algo:strictpos} applied to $b_i$ and $p_i$;
 
    Compute $\omega_{i,j}\in \Q_+, \overline h_{i,j}\in \Q[x]$ such that 
    $$b_i \equiv  \sum_{j} \omega_{i,j} \overline h_{i,j}^2 \mod p_i;
    $$
    
    Compute $ h_{i,j}\in \Q[x]$ such that 
    $$
    b_i \equiv  \sum_{j} \omega_{i,j} h_{i,j}^2  \mod p_i^{e_i}
    $$
    
    using Hensel lifting in Lemma \ref{lem:Hensel};
\item Compute $\omega_i\in \Q_+$, $ h_i\in \Q[x]$ such that 
$$b \equiv \sum_{i} \omega_{i} \overline{h}_{i}^2  \mod {f/d}$$

using Chinese Remainder construction in Lemma \ref{lem:CRT}; 
 \item  $h_i \leftarrow d \, \overline{h}_i$;
 \item Compute $q\in \Q[x]$ s.t. $g=\sum_i\omega_ih_i^2 + q\,f$;
\end{enumerate}
\textbf{Output:} $\omega_i\in \Q_+$, $h_i\in \Q[x]$, $q\in \Q[x]$ 
satisfying
$$
g = \sum_{i} \omega_i h_i^2 + q\, f.
$$
\end{algorithm}
\subsection{Example} 
We now revisit Example~\ref{ex:toy1} to  illustrate the symbolic-numeric approach based on Semi-Definite-Programming.

\begin{example} Let $f=x^3-2=(x-{2^{1/3}})(x-2^{1/3}\omega)(x-2^{1/3}\overline\omega)$, where $\omega=e^{2\pi \ib/3}$,  and $g=x$. 

Solving the convex optimization program:
$$
\begin{array}{cl}
     \max  & \quad \lambda \\
     s.t. & Q\in S^3(\R), Q - \lambda\, I\succcurlyeq 0\\
          & q\in \R[x]_{1}\\
          & g = \xx^t Q \xx + q f
\end{array}
$$
we obtain the matrix $Q^*$ of maximal rank and the polynomial $q^*$: 
\begin{align*}
Q^* &\approx  
\left[
\begin{array}{ccc}
0.6322063 & -0.0167531 & -0.2295612 \\
-0.0167531 & 0.4591225 & -0.1580516 \\
-0.2295612 & -0.1580516 & 0.5167531 \\
\end{array}
\right], \\
q^* &\approx  
 -0.5167531\,x + 0.3161031.
\end{align*}
The eigenvalues of $Q^*$ are approximately:
$$
0.246693, 0.5292293, 0.8321596.
$$
The norm of the error polynomial is $\rho\approx 1.16e^{-15}$ so that $\delta \approx 0.0227$ and rounding with $t=2$ decimal digits yields a positivity certificate. In fact, in this case, rounding with one decimal digit is enough:

$$
\overline Q =  \left[
\begin{array}{ccc}
0.6 & 0 & -0.2 \\
0 & 0.5 & -0.2 \\
-0.2 & -0.2 & 0.5 \\
\end{array}
\right]\quad\mbox{and}\quad 
q   =  -0.5\,x + 0.3,
$$
with error $e=\xx^T\overline Q\xx + q\,f -g = -0.1 x^3+ 0.1 x^2$ yield
 $$Q=\pi_{g-qf}(\overline Q)= \overline Q-Q_e=  \left[
\begin{array}{ccc}
\frac{3}{5} & 0 & \frac{-7}{30} \\[1mm]
0 & \frac{7}{15} & \frac{-3}{20} \\[1mm]
\frac{-7}{30} & \frac{-3}{20} & \frac{1}{2} \\
\end{array}
\right]. $$
It is a positive definite matrix (its eigenvalues are  approximately $0.24507,$  $0.505399, 0.816198$) which induces a  rational SOS decomposition of $g$ modulo $f$.
\end{example}

\section{Conclusion}
In this work,
\begin{enumerate}
    \item we showed that a univariate rational polynomial $g$ is strictly positive at all the real roots of a  univariate rational squarefree polynomial $f$ if and only if it is a sum of squares of rational univariate polynomials modulo $f$. To our knowledge, this fact was known for univariate polynomials in the global setting but not in the local setting;
    \item we showed that the usual assumption of $g$ being strictly positive at the real roots of a squarefree polynomial $f$ can be relaxed to non-negative when $\gcd(f,g)$ and $f/\gcd(f,g)$ relatively prime, which we believe  is the best assumption one can obtain; 
    \item we produced an algorithm for the local setting, which is the counterpart of known algorithms for the global setting in the strictly positive case, and involves Hensel lifting and Chinese Remainder Theorem in the non squarefree and non-negative case.
\end{enumerate}
Our projects are to derive bit complexity estimates for the proposed algorithms and also  to try to extend our results to the multivariate local setting of polynomials being non-negative at the real zero set of a  zero-dimensional ideal. Some of them can be extended mutatis-mutandis but there is still work to be done on the relaxation of the assumptions.


\end{document}